\newtheorem{theorem}{Theorem}[section]
\newtheorem{definition}{Definition}[section]
\newtheorem{lemma}{Lemma}[section]
\newtheorem{proposition}{Proposition}[section]
\newtheorem{corollary}{Corollary}[section]
\def \C {\mathbb{C}}
\def \R {\mathbb{R}}
\def \H {\mathbb{H}}
\def \N {\mathbb{N}}
\def \S {\mathcal{S}}
\def \P {\mathcal{P}}
\def \bP {\overline{P}^{\prime}}
\def \bQ {\overline{Q}^{\prime}}
\def\Xint#1{\mathchoice
{\XXint\displaystyle\textstyle{#1}}%
{\XXint\textstyle\scriptstyle{#1}}%
{\XXint\scriptstyle\scriptscriptstyle{#1}}%
{\XXint\scriptscriptstyle\scriptscriptstyle{#1}}%
\!\int}
\def\XXint#1#2#3{{\setbox0=\hbox{$#1{#2#3}{\int}$ }
\vcenter{\hbox{$#2#3$ }}\kern-.6\wd0}}
\def\dashint{\Xint-}
\begin{document}

\title{$\overline{Q}'$-curvature flow on Pseudo-Einstein CR manifolds}

\author{Ali Maalaoui$^{(1)}$ \& Vittorio Martino$^{(2)}$}
\addtocounter{footnote}{1}
\footnotetext{Department of Mathematics, Clark University, Worcester, MA 01610, USA.\\
E-mail address: {\tt{amaalaoui@clarku.edu}}}
\addtocounter{footnote}{1}
\footnotetext{Dipartimento di Matematica, Universit\`a di Bologna, piazza di Porta S.Donato 5, 40126 Bologna, Italy.\\
E-mail address: {\tt{vittorio.martino3@unibo.it}}}

\date{}
\maketitle

\vspace{5mm}

{\noindent\bf Abstract} {\small In this paper we consider the problem of prescribing the $\overline{Q}'$-curvature on three dimensional Pseudo-Einstein CR manifolds. We study the gradient flow generated by the related functional and we will prove its convergence to a limit function under suitable assumptions.}

\vspace{5mm}

\noindent
{\small Keywords: Pseudo-Einstein CR manifolds,  $\overline{P}'$-operator}

\vspace{4mm}

\noindent
{\small 2010 MSC. Primary: 58J60, 58J05.  Secondary: 58E05, 58E07 .}

\vspace{4mm}


\section{Introduction and statement of the results}

\noindent
Let $(M,T^{1,0}M,\theta)$ be a CR three manifold, which we will always assume smooth and closed. It is known that one can construct a pair $(Q,P_{\theta})$ such that under a conformal change of the contact form $\hat{\theta}=e^{2u}\theta$, one has
$$P_{\theta}u+Q_{\theta}=Q_{\hat{\theta}}e^{4u}$$
where the Paneitz operator $P_{\theta}=(\Delta_{b})^{2}+T^{2}+l.o.t.$; in particular the operator $P_{\theta}$ contains the space of CR pluriharmonic functions $\mathcal{P}$ in its kernel, moreover the total $Q$-curvature is always zero \cite{Hir2}, hence it does not provide any extra geometric information.\\
Therefore, one considers another pair $(P^{\prime},Q^{\prime})$, see \cite{Bran}, where $P^{\prime}$ is a Paneitz type operator satisfying $P^{\prime}=4(\Delta_{b})^{2}+l.o.t.$ and is defined on the space of pluriharmonic functions and the $Q^{\prime}$-curvature is defined implicitly so that
$$P^{\prime}_{\theta}u+Q^{\prime}_{\theta}-\frac{1}{2}P_{\theta}(u^{2})=Q^{\prime}_{\hat{\theta}}e^{2u},$$
which is equivalent to
\begin{equation}\label{eq1}
P^{\prime}_{\theta}u+Q^{\prime}_{\theta}=Q^{\prime}_{\hat{\theta}}e^{2u}\text{ mod }\mathcal{P}^{\perp}.
\end{equation}
In the case of pseudo-Einstein three dimensional CR manifolds (we refer the reader to the next section for further details), in \cite{CaYa} the authors showed that the total $Q^{\prime}$-curvature is not always zero and it is invariant under the conformal change of the contact structure; in particular it is proportional to the Burns-Epstein invariant $\mu(M)$ (see \cite{BE}, \cite{CL}) and if $(M,J)$ is the boundary of a strictly pseudo-convex domain $X$, then
$$\int_{M}Q^{\prime}\theta\wedge d\theta=16\pi^{2}\Big(\chi(X)-\int_{X}(c_{2}-\frac{1}{3}c_{1}^{2})\Big),$$
where $c_{1}$ and $c_{2}$ are the first and second Chern forms of the K\"{a}hler-Einstein metric on $X$ obtained by solving Fefferman's equation.\\
At this point, in order to avoid the problem of solving orthogonally to the infinite dimensional space $\mathcal{P}^{\perp}$, in \cite{CaYa2} it is introduced on pseudo-Einstein three dimensional CR manifolds a new couple $(\bP, \bQ)$ which comes from the projection of equation $(\ref{eq1})$ on to the space of $L^{2}$ CR  pluriharmonic functions $\hat{\mathcal{P}}$, the completion of $\P$ under the $L^{2}$-norm. Since the $P^{\prime}$-operator is only defined after projection on $\mathcal{P}$, if $\Gamma: L^{2}(M)\to \hat{\mathcal{P}}$ denotes the orthogonal projection and we let $\bP=\Gamma\circ P^{\prime}$ and $\bQ=\Gamma \circ Q^{\prime}$, then one can consider the problem of prescribing the $\bQ$-curvature, under conformal change of the contact structure on pseudo-Einstein CR manifolds. In particular, for a given a function $f\in \hat{\mathcal{P}}$, one wants to solve the following equation
\begin{equation}\label{eq2}
P' u+Q'=fe^{2u} \text{ mod } \mathcal{P}^{\perp},
\end{equation}
that is equivalent to
$$\bP u+\bQ =\Gamma(fe^{2u}).$$
Therefore, if $u$ solves $(\ref{eq2})$, then for $\tilde{\theta}=e^{u}\theta$, one has $\bQ_{\tilde{\theta}}=f$. Let us explicitly notice the differences between the two projections, since the space of $L^{2}$ CR pluriharmonic functions $\hat{\mathcal{P}}$ does not depend on the contact form. Thus, $\bQ$ is the orthogonal projection of $Q^{\prime}$ on $\hat{\mathcal{P}}$ with respect to the $L^{2}$-inner product induced by $\theta$, while $\bQ_{\tilde{\theta}}$ is the orthogonal projection of $Q^{\prime}_{\tilde{\theta}}$ with respect to the $L^{2}$-inner product induced by $\tilde{\theta}$; in particular $\phi\in \hat{\mathcal{P}}_{\theta}$ if and only if $\phi \in \hat{\mathcal{P}}_{\tilde{\theta}}$ and $\psi \in \mathcal{P}^{\perp}_{\theta}$ if and only if $e^{-2u}\psi \in \mathcal{P}^{\perp}_{\tilde{\theta}}$. Therefore, by denoting $\Gamma_{u}$ the orthogonal projection induced by $\tilde{\theta}$, one has $\Gamma_{u}(Q^{\prime}_{\tilde{\theta}})=f$. Let us also recall that in \cite{CaYa}, the authors show that the non-negativity of the Paneitz operator $P_{\theta}$ and the positivity of the CR-Yamabe invariant imply that $\bP$ is non-negative and $\ker \bP=\R$. Moreover, $\int_{M}Q^{\prime} =\int_{M}\bQ \leq 16\pi^{2}$ with equality if an only if $(M,T^{1,0}M,\theta)$ is the standard sphere; in particular the previously assumptions imply that the $(M,T^{1,0}M,\theta)$ is embeddable (see \cite{Chan}). Notice that unlike the Riemannian case, it remains unclear if the non-negativity of $\overline{P}'$ and $\ker \overline{P}'=\R$ is a sufficient condition for $\int_{M}Q'_{\theta}\leq 16\pi^{2}$. In particular, the results presented in this paper do not fully cover the case $\overline{P}'\geq 0$ and $\ker \overline{P}'=\R$.\\
Thus, from now on we will always assume that $(M,T^{1,0}M,\theta)$ is a pseudo-Einstein CR three dimensional manifold such that $\bP$ is non-negative and $\ker \bP=\R$. The problem in (\ref{eq2}) was first studied in \cite{CaYa} for $f$ constant and in the subcritical case, namely $\int_{M} \bQ  <16\pi^{2}$. Then in \cite{M1} the problem was solved for $f>0$ via a probabilistic approach again in the subcritical case; also, a solution of the problem was provided in \cite{QQ} for $f>0$ and $0<\int_{M}\bQ <16\pi^{2}$ via direct minimization.\\
In this paper we will study the equation (\ref{eq2}) allowing $f$ to change sign: our approach follows closely the methods in \cite{BFR}, where the authors study the analogous problem in the Riemannian setting. In particular, we will use a variational approach by defining a suitable functional on an appropriate space and then we will study the evolution problem along the negative gradient flow lines: the convergence at infinity will provide a solution to the initial problem. Indeed, with respect to \cite{BFR}, new technical issues will appear, which are essentially due to our sub-Riemannian setting: in particular all the computations and the estimates regarding the convergence along the flow lines have to be done accordingly to the projection on the space of $L^2$ CR pluriharmonic functions, that we defined earlier. Moreover, some technical estimates on the sphere will be adapted to the CR setting as we will see in Section 5 and the Appendix. Therefore, let us define the following functional $E:H\to \R$, by
$$E(u)=\int_{M}u \bP u + 2 \int_{M} \bQ u $$
where $H=\hat{\P}\cap \S^{2}(M)$ and $\S^{2}(M)$ is the Folland-Stein Sobolev space equipped with the equivalent norm (see section 2), defined by
\begin{equation}\label{norm}
\|u\|^{2}=\int_{M}u \bP u  + \int_{M} u^{2} .
\end{equation}
We consider the following space, which will serve as a constraint
$$X=\left\{u\in H; N(u):=\int_{M} \Gamma\left(f e^{2u}\right) = \int_{M}\bQ  \; \right\} ;$$
we notice that the space is well defined since $e^u \in L^2$, see \cite{CaYa2}, Theorem 3.1.

\noindent
As in the classical case, we will need the following hypotheses, depending on the sign of $\int_{M} \bQ$, namely:
\begin{equation}\label{hypotheses}
\left\{\begin{array}{lll}
(i)& \displaystyle\inf_{x\in M}f(x)<0, & \text{ if } \displaystyle\int_{M}\bQ <0\\
\\
(ii)& \displaystyle\sup_{x\in M}f(x)>0, \; \inf_{x\in M}f(x)<0  & \text{ if } \displaystyle\int_{M}\bQ =0\\
\\
(iii)& \displaystyle\sup_{x\in M}f(x)>0, & \text{ if } 0< \displaystyle\int_{M}\bQ \leq 16\pi^{2}.
\end{array}
\right.
\end{equation}

\noindent

In the case when $\int_{M}\overline{Q}'=0$, we let $\ell$ the unique CR pluriharmonic function satisfying $\overline{P}'_{\theta}\ell+\overline{Q}'=0$ and $\int_{M}\ell =0$, see \cite{CaYa2}, Theorem 1.1. Notice that $\overline{Q}'_{e^{\ell}\theta}=0$. We also recall that in the critical case $M=S^{3}$, there are some extra compatibility condition of Kazdan-Warner type that $f$ needs to satisfy in order to be the $\bQ$-curvature of a contact structure conformal to the standard one on the sphere (see Theorem 1.3. in \cite{QQ}).\\
Now, in order to define the flow equation, we compute the first variation of $E$, $N$, and their ($\S^2$) gradient, respectively:
\begin{align*}
\langle \nabla E(u),\phi \rangle & = 2 \int_{M} \left( \bP u + \bQ \right)\phi  \; , \forall \phi  \in H \; ,\\
\langle \nabla N(u),\phi \rangle & = 2 \int_{M} \Gamma\left( f e^{2u}\right)\phi  \; , \forall \phi  \in H \; ,\\
\nabla E(u) & =2\left(\bP +I \right)^{-1}\left(\bP u+\bQ \right) \; ,\\
\nabla N(u) & =2\left(\bP +I \right)^{-1}\Gamma\left( fe^{2u}\right) \; .
\end{align*}
In addition, since by hypotheses (\ref{hypotheses}), $\nabla N\neq 0$ on $X$, then $X$ is a regular hypersurface in $H$ and a unit normal vector field on $X$ is given by $\nabla N/\|\nabla N\|$. Indeed, $\nabla N(u)\not=0$ if and only if $\Gamma(e^{2u}f)\not=0$. This last identity is clear for the hypothesis $(i)$ and $(iii)$. But for $(ii)$, recall that $f\in \hat{\P}$, so if $\Gamma(e^{2u}f)=0$, then $\int_{M}e^{2u}f^{2}=0$, leading to a contradiction. The gradient of $E$ restricted to $X$ is then
$$\nabla^{X}E=\nabla E -\left\langle \nabla E, \frac {\nabla N}{\|\nabla N\|} \right\rangle\frac {\nabla N}{\|\nabla N\|}.$$
Finally, the (negative) gradient flow equation is given by
\begin{equation}\label{flowequation}
\left\{\begin{array}{lll}
\partial_t u = - \nabla^{X} E(u)\\
\\
u(0) = u_0 \in X
\end{array}
\right.
\end{equation}

\noindent
Now we can state our main results.
\begin{theorem}\label{negativecase}
Let $(M,T^{1,0}M,\theta)$ be a pseudo-Einstein CR three dimensional manifold such that $\bP$ is non-negative and $\ker \bP=\R$. Let us assume that $\displaystyle\int_{M}\bQ <0$ and let $f\in C(M)\cap \hat{\P}$ as in (\ref{hypotheses}). Then there exists a positive constant $C_{0}$ depending on $f^{-}=\max\{-f,0\}$, $M$ and $\theta$, such that if
$$e^{\tau \|u_{0}\|^{2}}\sup_{x\in M}f(x)\leq C_{0}$$
for a constant $\tau>1$ depending on $M$ and $\theta$, then as $t\to\infty$, the flow converges in $H$ to a solution $u_{\infty}$ of $(\ref{eq1})$. Moreover, there exist constants $B,\beta>0$ such that
$$\|u(t)-u_{\infty}\|\leq B(1+t)^{-\beta},$$
for all $t\geq 0$.
\end{theorem}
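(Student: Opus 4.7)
My strategy combines three ingredients: (i) local existence and monotonicity of the flow, (ii) a CR Moser-Trudinger/Beckner-Onofri inequality together with the smallness assumption to produce a uniform $H$-bound along the flow, and (iii) a Lojasiewicz-Simon gradient inequality to upgrade subsequential convergence to full convergence with polynomial rate.

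Since hypothesis (\ref{hypotheses}) makes $X$ a smooth codimension-one submanifold of $H$ and $\nabla^{X}E$ is smooth on $X$, standard Hilbert-manifold ODE theory yields a local solution $u\in C^{1}([0,T^{*});X)$, and differentiation gives
\[
\frac{d}{dt}E(u(t)) = -\|\nabla^{X}E(u(t))\|^{2}\le 0,
\]
so $E(u(t))\le E(u_{0})$ for $t\in[0,T^{*})$. For the key uniform bound, I would split $u=\bar{u}+v$ with $\int_{M}v=0$; since $\ker\bP=\R$, one has $\int_{M}u\bP u=\int_{M}v\bP v$, and a Poincaré inequality makes $\|v\|^{2}$ equivalent to $\int_{M}v\bP v$. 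Using $1\in\hat{\P}$, the constraint reduces to $\int_{M}fe^{2u}=\int_{M}\bQ<0$, i.e.
\[
e^{2\bar{u}}\int_{M}fe^{2v}=\int_{M}\bQ,
\]
which determines $\bar{u}$ as an explicit function of $v$ provided $\int_{M}fe^{2v}<0$; this negativity is made possible precisely by $f^{-}\not\equiv 0$ (hypothesis $(i)$). Substituting into $E(u)=\int_{M}v\bP v+2\bar{u}\int_{M}\bQ+2\int_{M}\bQ v$ and applying the CR Beckner-Onofri inequality
\[
\log\int_{M}e^{2v}\le\frac{1}{16\pi^{2}}\int_{M}v\bP v+C_{M},\qquad\int_{M}v=0,
\]
one derives a coercivity estimate bounding $\int_{M}v\bP v$ and $|\bar{u}|$ in terms of $E(u_{0})$, $\|f^{-}\|_{\infty}$ and $|\int_{M}\bQ|$. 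The smallness assumption $e^{\tau\|u_{0}\|^{2}}\sup f\le C_{0}$ enters here to dominate the $f^{+}$-contribution so that $|\int_{M}fe^{2v}|$ stays bounded away from zero and $\bar{u}$ remains controlled; combined with the monotonicity this yields $\|u(t)\|\le R$ on $[0,T^{*})$, and hence $T^{*}=\infty$.

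With the uniform bound in hand, a Palais-Smale-type argument for $E|_{X}$ (again via Moser-Trudinger and the invertibility of $\bP+I$ on the orthogonal complement of constants in $\hat{\P}$) shows that the $\omega$-limit set is nonempty and consists of critical points $u_{\infty}$ of $E|_{X}$. By the Lagrange multiplier principle and the constraint, such $u_{\infty}$ satisfies $\bP u_{\infty}+\bQ=\Gamma(fe^{2u_{\infty}})$, which is exactly (\ref{eq2}). To promote this to full convergence, I would apply a Lojasiewicz-Simon inequality for the real-analytic functional $E|_{X}$ at $u_{\infty}$: there exist a neighborhood $U\subset X$ of $u_{\infty}$ and constants $\alpha\in(0,1/2]$, $c>0$ with
\[
|E(u)-E(u_{\infty})|^{1-\alpha}\le c\,\|\nabla^{X}E(u)\|,\qquad u\in U.
\]
Once the flow enters $U$, integrating $\frac{d}{dt}(E(u(t))-E(u_{\infty}))^{\alpha}$ against $\|\partial_{t}u\|$ (the classical Simon-Feireisl argument) produces $\int_{0}^{\infty}\|\partial_{t}u\|\,dt<\infty$, so $u(t)\to u_{\infty}$ in $H$, and the familiar estimate yields the polynomial decay $\|u(t)-u_{\infty}\|\le B(1+t)^{-\beta}$ with $\beta$ determined by $\alpha$.

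The main obstacle is the coercivity/uniform-bound step. The projection $\Gamma$ onto $\hat{\P}$ sits inside both the constraint $N$ and the gradient equation, and although $\int_{M}\Gamma(fe^{2u})=\int_{M}fe^{2u}$ (because $1\in\hat{\P}$), the finer $L^{p}$ and exponential control of $\Gamma(fe^{2u})$ needed to close the flow estimates requires CR-adapted Moser-Trudinger tools; these are precisely the technical inputs that the authors announce for Section 5 and the Appendix. A secondary delicate point is the calibration of $C_{0}$ and $\tau$ so that the smallness of $\sup f\cdot e^{\tau\|u_{0}\|^{2}}$ absorbs the sharp constant $1/(16\pi^{2})$ in the Beckner-Onofri inequality while still leaving room for the flow to strictly decrease $E$ from its initial level.
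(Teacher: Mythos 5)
Your overall scaffolding matches the paper's: (i) local existence by Cauchy--Lipschitz and monotonicity of $E$ along the flow, (ii) a uniform a priori bound along the flow (which in the paper is achieved via Corollary~\ref{average} once $\bar u$ is controlled), and (iii) an abstract Lojasiewicz--Simon argument for full convergence with polynomial rate, as in Lemma~\ref{convergence}. All three are present in the paper.

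The genuine gap is in step (ii). You write $\bar u = \tfrac{1}{2}\ln\bigl(\int_M \bQ / \int_M f e^{2v}\bigr)$ and then claim that the Beckner--Onofri inequality plus the smallness hypothesis ``yields $\|u(t)\|\le R$.'' But the bound on $\bar u$ from above requires keeping $\int_M f e^{2v}$ strictly away from $0$, which in turn requires bounding $\int_M f^+ e^{2v}\le (\sup f)\int_M e^{2v}$ and hence bounding $\int_M e^{2v}$, hence $\int_M v\bP v$. Yet from the energy identity $\int_M v\bP v + 2\int_M \bQ\,v + 2\bar u\int_M\bQ \le E(u_0)$ with $\int_M\bQ<0$, the term $2\bar u\int_M\bQ$ is unbounded below as $\bar u\to+\infty$, so the energy monotonicity alone does \emph{not} bound $\int_M v\bP v$; it only does so once you already have $\bar u$ bounded above. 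Your argument is therefore circular as stated: the control of $v$ needs the control of $\bar u$, and the control of $\bar u$ needs the control of $v$. The paper resolves this by a continuation/bootstrap argument (Lemma~\ref{concentration3}): it tracks the single quantity $\int_M e^{2u}$, sets a threshold $2r$, and runs a contradiction argument at the first time $T$ where $\int_M e^{2u(T)}=2r$, splitting into the case $\int f^+e^{2u}\le\tfrac12\int f^-e^{2u}$ (handled by the constraint alone, via Lemma~\ref{concentration1}--\ref{concentration2} with $K=\{f\le\tfrac12\inf f\}$) and the complementary case (handled by the smallness hypothesis). That dichotomy, not a direct coercivity estimate, is where $C_0$ and $\tau$ are actually calibrated, and it is the missing ingredient in your write-up.

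A secondary but minor point: once $\int_M e^{2u}$ is uniformly bounded, Jensen gives the upper bound on $\bar u$; the lower bound on $\bar u$ comes from the energy identity and $\int_M\bQ<0$ exactly as you indicate, so that half of your plan is fine.
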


\begin{theorem}\label{zerocase}
Let $(M,T^{1,0}M,\theta)$ be a pseudo-Einstein CR three dimensional manifold such that $\bP$ is non-negative and $\ker \bP=\R$. Let us assume that $\displaystyle\int_{M}\bQ =0$ and let $f\in C(M)\cap \hat{\P}$ as in (\ref{hypotheses}). Then as $t\to\infty$, the flow converges in $H$ to a function $u_{\infty}$ and there exists a constant $\lambda$ such that $v=u_{\infty}+\lambda$ satisfies
$$\bP v+\bQ=\delta \Gamma(fe^{2v}) ,$$
where $\delta\in \{+1,0,-1\}$. Moreover, there exist constants $B,\beta>0$ such that
$$\|u(t)-u_{\infty}\|\leq B(1+t)^{-\beta},$$
for all $t\geq 0$. If in addition, we assume that $\int_{M} fe^{2\ell}\not=0$, then $\delta\not=0$.
\end{theorem}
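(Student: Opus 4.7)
The starting observation is that when $\int_M\bQ=0$, both $E$ and the constraint $X$ enjoy a gauge symmetry under constant shifts. Using $\bP 1=0$ and $\int_M\bQ=0$ one checks $E(u+c)=E(u)$, while $N(u+c)=e^{2c}N(u)$ so $X$ is shift invariant. At the level of gradients, $\nabla E$ is shift invariant and $\nabla N$ scales by $e^{2c}$, hence $\nabla^X E$ is itself shift invariant and the flow commutes with constant shifts. Furthermore, since $(\bP+I)^{-1}1=1$ and $\Gamma 1=1$, integrating gives $\int_M\nabla E(u)=2\int_M\bQ=0$ and $\int_M\nabla N(u)=2N(u)=0$ on $X$, so $\int_M u(t)$ is conserved along the flow. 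This pins down the gauge and lets one expect genuine convergence of $u(t)$ rather than convergence only modulo constants.

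I would next establish a uniform bound $\|u(t)\|\leq C$ by combining the monotonicity $\tfrac{d}{dt}E(u(t))=-\|\nabla^X E(u(t))\|^2\leq 0$ with coercivity of $E$ on the slice $X\cap\{\int_M u=\int_M u_0\}$; the coercivity follows from the CR Moser--Trudinger type inequality developed in Section 5 and the Appendix together with the sign-change hypothesis (ii), which forces a genuine balance in the constraint $\int_M f e^{2u}=0$. Standard parabolic estimates from the flow equation then give relative compactness of $\{u(t)\}_{t\geq 0}$ in $H$, and the energy identity $\int_0^\infty\|\nabla^X E(u(t))\|^2\,dt\leq E(u_0)-\inf_X E<\infty$ produces a sequence $t_n\to\infty$ with $u(t_n)\to u_\infty$ in $H$ and $\nabla^X E(u_\infty)=0$. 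The main technical obstacle is the passage from subsequential to full convergence with rate: I would prove a Lojasiewicz--Simon inequality $|E(u)-E(u_\infty)|^{1-\theta}\leq C\|\nabla^X E(u)\|$ in an $H$-neighbourhood of $u_\infty$ by analyzing the Hessian of the Lagrangian $E-\lambda_\infty N$ at $u_\infty$ on $T_{u_\infty}X$ as a Fredholm operator; the constant mode, being orthogonal to $\nabla^X E$ by the computation above, causes no trouble. The standard ODE argument on $E(u(t))-E(u_\infty)$ then yields $\|u(t)-u_\infty\|\leq B(1+t)^{-\beta}$.

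Finally, from $\nabla^X E(u_\infty)=0$ there exists a Lagrange multiplier $\lambda_\infty\in\R$ with $\bP u_\infty+\bQ=\lambda_\infty\Gamma(fe^{2u_\infty})$. Setting $\delta=\mathrm{sign}(\lambda_\infty)\in\{-1,0,+1\}$ and, when $\lambda_\infty\neq 0$, $\lambda=\tfrac12\log|\lambda_\infty|$ (otherwise $\lambda=0$), the identities $\bP(u_\infty+\lambda)=\bP u_\infty$ and $\Gamma(fe^{2(u_\infty+\lambda)})=e^{2\lambda}\Gamma(fe^{2u_\infty})$ yield $\bP v+\bQ=\delta\Gamma(fe^{2v})$ for $v=u_\infty+\lambda$, as claimed. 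If moreover $\delta=0$, then $\bP u_\infty=-\bQ$; since $u_\infty\in\hat{\P}$ and $\ker\bP=\R$, the uniqueness of $\ell$ recalled before the theorem forces $u_\infty=\ell+c$ for some $c\in\R$, and the constraint $u_\infty\in X$ becomes $e^{2c}\int_M fe^{2\ell}=0$, contradicting the additional hypothesis $\int_M fe^{2\ell}\neq 0$. Hence $\delta\neq 0$ in that case.
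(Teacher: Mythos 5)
Your proposal is essentially the paper's proof: observe that $\int_M u(t)$ is conserved along the flow when $\int_M\bQ=0$ (because $\langle\nabla E(u),1\rangle=0$ and, on $X$, $\langle\nabla N(u),1\rangle=2N(u)=2\int_M\bQ=0$), deduce a uniform bound $\|u(t)\|\leq C$, then obtain polynomial convergence by a Lojasiewicz--Simon argument, and read off the trichotomy $\delta\in\{+1,0,-1\}$ from the Lagrange multiplier; the $\delta\neq 0$ claim under $\int_M fe^{2\ell}\neq 0$ follows exactly as you say, from $\ker\bP=\R$ forcing $u_\infty-\ell$ constant. In the paper this is packaged as: the preserved-average observation, plus Corollary \ref{average} (which gives the uniform bound) and Lemma \ref{convergence} (which already contains the Lojasiewicz--Simon argument and the full classification of the limit equation, including the $\lambda(u_\infty)=0$ and $\lambda(u_\infty)\lessgtr 0$ cases).

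Two small technical points where your route strays. First, once $\bar u(t)\equiv\bar u_0$, the coercivity bound does \emph{not} require the CR Moser--Trudinger inequality nor the sign-change hypothesis (ii): the paper's Corollary \ref{average} gets $\int_M u\bP u\leq C$ directly from the energy identity $\int_M u\bP u+2\int_M\bQ u\leq E(u_0)$ together with the Poincar\'e inequality $\int_M u\bP u\geq\lambda_1\|u-\bar u\|_{L^2}^2$ and Young's inequality, and this is the simplest path; invoking the exponential inequality here is unnecessary overhead. Second, ``standard parabolic estimates'' is not the right framework: the evolution $\partial_t u=-\nabla^X E(u)$ is an ODE in the Hilbert space $H$, not a parabolic PDE, so there is no parabolic smoothing to exploit. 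The paper instead extracts a convergent subsequence from the energy identity ($\int_0^\infty\|\partial_t u\|^2\,dt<\infty$) together with weak compactness, and upgrades $fe^{2u(t_k)}\to fe^{2u_\infty}$ to strong $L^p$ convergence via Egorov's theorem and the uniform $L^p$ bounds on $e^{2u}$ coming from Theorem \ref{thmbran}. Relative compactness of the whole orbit in $H$ is neither a priori available by this route nor needed: the Lojasiewicz--Simon inequality turns subsequential convergence into full convergence. Your handling of the constant degenerate direction in the Hessian is correct in spirit --- the Hessian of the Lagrangian does annihilate $1$, and the flow moves orthogonally to it --- but the cleaner justification is that the Fredholm property of $(\nabla^X)^2 E(u_\infty)$ on $T_{u_\infty}X$ is all that the Lojasiewicz--Simon machinery requires, which is what Lemma \ref{convergence} invokes.
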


\begin{theorem}\label{positivecase}
Let $(M,T^{1,0}M,\theta)$ be a pseudo-Einstein CR three dimensional manifold such that $\bP$ is non-negative and $\ker \bP=\R$. Let us assume that $0< \displaystyle\int_{M}\bQ <16\pi^{2}$ and let $f\in C(M)\cap \hat{\P}$ as in (\ref{hypotheses}). Then as $t\to\infty$, the flow converges in $H$ to a solution $u_{\infty}$ of $(\ref{eq1})$. Moreover, there exist constants $B,\beta>0$ such that
$$\|u(t)-u_{\infty}\|\leq B(1+t)^{-\beta},$$
for all $t\geq 0$.
\end{theorem}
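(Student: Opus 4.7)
The plan is to follow the template for gradient flows with exponential nonlinearity: derive a uniform bound along the flow from a sharp CR Moser--Trudinger (Beckner--Onofri) inequality, extract a subsequential limit that solves (\ref{eq1}), and then upgrade to full convergence with polynomial rate via a Lojasiewicz--Simon inequality. Short-time existence of (\ref{flowequation}) in the smooth hypersurface $X\subset H$ follows from standard parabolic theory, using that $(\bP+I)^{-1}$ is a pseudodifferential operator of negative order on $\hat{\P}$ and that $u\mapsto \Gamma(fe^{2u})$ is smooth on $H$ (since $e^{u}\in L^{p}$ for every $p$ by Theorem 3.1 of \cite{CaYa2}). Along the flow, $\tfrac{d}{dt}E(u(t))=-\|\nabla^{X}E(u(t))\|^{2}\le 0$, so the core analytic task is to translate the upper bound $E(u(t))\le E(u_{0})$ into a uniform bound on $\|u(t)\|$.

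For the a priori estimate, I would decompose $u=\bar{u}+v$ with $\int_{M}v=0$; the constraint $N(u)=\int_{M}\bQ$ becomes $e^{2\bar{u}}\int_{M}fe^{2v}=\int_{M}\bQ>0$, which (given $\sup f>0$) determines $\bar{u}$ whenever $\int_{M}fe^{2v}>0$. Substituting and using $\bP\,1=0$,
$$E(u)=\int_{M}v\bP v\,-\,\Big(\int_{M}\bQ\Big)\log\int_{M}fe^{2v}\,+\,2\int_{M}\bQ\,v\,+\,C.$$
Bounding $\int_{M}fe^{2v}\le (\sup f)\int_{M}e^{2v}$ and applying the projected CR Beckner--Onofri inequality $\log\dashint_{M}e^{2v}\le\tfrac{1}{16\pi^{2}}\int_{M}v\bP v+C$ for $v$ of zero mean (the critical form of Theorem 3.1 in \cite{CaYa2}), together with the strict inequality $\int_{M}\bQ<16\pi^{2}$, gives
$$E(u)\ge \Big(1-\tfrac{\int_{M}\bQ}{16\pi^{2}}\Big)\int_{M}v\bP v\,+\,2\int_{M}\bQ\,v\,-\,C.$$
Absorbing the linear term by Young's inequality and the Poincaré inequality (valid because $\ker\bP=\R$) yields $E(u)\ge c\|v\|^{2}-C$, so $\|v\|$ is uniformly bounded along the flow. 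A bound on $|\bar{u}|$ then follows by combining the upper Moser--Trudinger estimate with a uniform lower bound $\int_{M}fe^{2v}\ge c>0$, which one extracts by exploiting the positivity of $\int_{M}\bQ$ together with the already-established $\S^{2}$-bound on $v$.

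Once $\|u(t)\|$ is bounded, integrating $\tfrac{d}{dt}E=-\|\nabla^{X}E\|^{2}$ yields $\int_{0}^{\infty}\|\nabla^{X}E\|^{2}\,dt<\infty$, so there is a sequence $t_{n}\to\infty$ along which $u(t_{n})$ converges strongly in $H$ (by elliptic regularity for $\bP$) to a critical point $u_{\infty}$ of $E|_{X}$; such a $u_{\infty}$ solves (\ref{eq1}), with the Lagrange multiplier matched through the definition of $X$. To upgrade to full convergence with polynomial decay, I would apply a Lojasiewicz--Simon inequality at $u_{\infty}$: both $E$ and $N$ are real-analytic on $H$, and the linearization of $\nabla^{X}E$ at $u_{\infty}$ is a compact perturbation of the identity on $T_{u_{\infty}}X$, hence Fredholm of index zero. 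This yields $|E(u)-E(u_{\infty})|^{1-\theta}\le C\|\nabla^{X}E(u)\|$ for some $\theta\in(0,1/2]$ in a neighbourhood of $u_{\infty}$, and a standard ODE comparison then converts this into the decay $\|u(t)-u_{\infty}\|\le B(1+t)^{-\beta}$. The main obstacle is the sharp projected CR Beckner--Onofri inequality with constant $16\pi^{2}$ and its use for sign-changing $f$: estimating $\log\int_{M}fe^{2v}$ rather than $\log\int_{M}e^{2v}$ forces the uniform lower bound on $\int_{M}fe^{2v}$ mentioned above, which is where the CR-specific technical estimates of Section~5 and the Appendix are expected to enter the argument.
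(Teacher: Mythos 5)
Your proposal is correct and follows essentially the same strategy as the paper: uniform bound on $\|u(t)\|$ from the CR Beckner--Onofri inequality together with the subcritical gap $\int_M\bQ<16\pi^2$, then the already-established convergence and Lojasiewicz--Simon machinery (Lemma 3.2). The only organizational difference is cosmetic: you substitute $\bar u=\tfrac12\log\bigl(\int_M\bQ/\int_M fe^{2v}\bigr)$ into $E$ to get a single inequality proving $\|v\|$ bounded (and then recover bounds on $\bar u$), whereas the paper bounds $\bar u$ from above directly from the energy identity and from below by adding $-\delta$ times the Moser--Trudinger estimate with $\delta$ chosen so that $\int_M\bQ<1/\delta<16\pi^2$; both manipulations use the same ingredients ($\int_M fe^{2u}=\int_M\bQ>0$, $\int_M fe^{2u}\le\|f\|_\infty\int_M e^{2u}$, Theorem 3.1, Poincar\'e/Young). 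One small misattribution: the CR-specific estimates of Section~5 and the Appendix (the improved Moser--Trudinger inequality under the moment constraint $\int_{S^3}\zeta_ie^{2u}=0$) are needed only for the critical case $\int_M\bQ=16\pi^2$ on $S^3$ in Theorem~1.4; in this subcritical case the plain Theorem~3.1 suffices, exactly as you in fact use it.
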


\noindent
Finally, the critical case of the sphere, which is a bit different. We will consider a group $G$ acting on $S^{3}$ preserving the CR structure. We denote by $\Sigma$ the set of points fixed by $G$, that is
$$\Sigma=\left\{x\in S^{3}; \; g\cdot x=x, \;  \forall g\in G \right\}$$
and we will assume $f$ being invariant under $G$, namely $f(g\cdot x)=f(x), \forall g\in G$. Then we have the following
\begin{theorem}\label{criticalcase}
Let us consider the sphere $M=S^3$ equipped with its standard contact structure and let $f\in C(M)\cap \hat{\P}$ as in (\ref{hypotheses}) and invariant under $G$. Let us assume that also $u_0 \in X$ is invariant under $G$. If $\Sigma=\emptyset$ or
$$\sup_{x\in \Sigma}f(x)\leq e^{-\frac{E(u_{0})}{16\pi^{2}}} , $$
then as $t\to\infty$, the flow converges in $H$ to a solution (invariant under $G$) $u_{\infty}$ of $(\ref{eq1})$. Moreover, there exist constants $B,\beta>0$ such that
$$\|u(t)-u_{\infty}\|\leq B(1+t)^{-\beta},$$
for all $t\geq 0$.
\end{theorem}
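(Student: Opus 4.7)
The plan combines a symmetry reduction with a sharp critical Moser-Trudinger / Beckner-Onofri inequality adapted to the CR setting. Since the standard contact structure on $S^{3}$ is $G$-equivariant, the operators $\bP$ and $\Gamma$ commute with the $G$-action, and hence the functionals $E, N$ are $G$-invariant and the flow (\ref{flowequation}) preserves the closed subspace $H^{G}\subset H$ of $G$-invariant functions. Therefore the assumption $u_{0}\in X\cap H^{G}$ propagates along the flow. Short-time existence follows exactly as in the proofs of Theorems \ref{negativecase}-\ref{positivecase}, producing a solution on a maximal interval $[0,T^{*})$ along which $E(u(t))$ is non-increasing; in particular $E(u(t))\le E(u_{0})$.

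The crucial step is to rule out blow-up and establish precompactness of $\{u(t)\}$ in $H$. Following a Struwe-type concentration-compactness argument, one extracts a sequence $t_{n}\to T^{*}$ along which either $u(t_{n})$ is bounded in $H$ (and we are done) or the measure $e^{2u(t_{n})}\,\theta\wedge d\theta$ concentrates, converging weakly to a Dirac mass at some $x_{0}\in S^{3}$. The $G$-invariance of each $u(t_{n})$ forces $g\cdot x_{0}=x_{0}$ for all $g\in G$, i.e. $x_{0}\in \Sigma$; so if $\Sigma=\emptyset$ concentration is impossible. Otherwise, one invokes the sharp CR Beckner-Onofri inequality on $S^{3}$ developed in Section 5 and the Appendix, in the schematic form
\begin{equation*}
\log \dashint_{M}e^{2(u-\bar{u})}\,\theta\wedge d\theta \;\le\; \frac{E(u)}{16\pi^{2}}+C,
\end{equation*}
together with the constraint $N(u(t_{n}))=\int_{M}\bQ=16\pi^{2}$. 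Concentration at $x_{0}$ forces $\int_{M}\Gamma(fe^{2u(t_{n})})$ to behave asymptotically like $f(x_{0})\int_{M}e^{2u(t_{n})}$, so $\int_{M}e^{2u(t_{n})}$ remains of order $16\pi^{2}/f(x_{0})$. Substituting into the sharp inequality and using lower semicontinuity of $E$ along blow-up profiles yields
\begin{equation*}
E(u_{0})\;\ge\;\liminf_{n\to\infty} E(u(t_{n}))\;\ge\;-16\pi^{2}\log f(x_{0})+o(1),
\end{equation*}
contradicting the hypothesis $\sup_{x\in\Sigma}f(x)\le e^{-E(u_{0})/(16\pi^{2})}$. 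Hence concentration cannot occur, $T^{*}=\infty$, and $\{u(t)\}_{t\ge 0}$ is precompact in $H$.

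Once precompactness is established, convergence of the whole trajectory to a critical point $u_{\infty}$ of $E|_{X\cap H^{G}}$, together with the polynomial rate $\|u(t)-u_{\infty}\|\le B(1+t)^{-\beta}$, follows from the Lojasiewicz-Simon gradient inequality applied to the real-analytic functional $E$ restricted to the regular constraint $X$, exactly as in the proofs of Theorems \ref{negativecase}-\ref{positivecase}; the equation $\bP u_{\infty}+\bQ = \Gamma(fe^{2u_{\infty}})$ is then the Euler-Lagrange equation on $X$ once the Lagrange multiplier is identified (it equals $1$ by the usual computation, since $u_{\infty}\in X$). The main obstacle is genuinely the second step: adapting the sharp critical Moser-Trudinger / Beckner-Onofri inequality and its concentration-compactness analysis from the Riemannian sphere to the CR sphere while carefully accounting for the projection $\Gamma$ onto $\hat{\P}$, since the constraint only controls the projected mass and not $\int_{M}fe^{2u}$ directly. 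Verifying that $e^{-E(u_{0})/(16\pi^{2})}$ is the correct critical threshold, and that the CR bubbles realize the equality case, is precisely the technical content that the Appendix is designed to supply.
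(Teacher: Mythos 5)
Your overall strategy matches the paper's: preserve $G$-invariance along the flow, establish a concentration-compactness dichotomy, force any concentration point into $\Sigma$, and derive a contradiction from the Moser--Trudinger inequality and the hypothesis $\sup_{\Sigma}f\leq e^{-E(u_{0})/16\pi^{2}}$. However, there is a genuine gap in the middle step. You invoke a ``Struwe-type concentration-compactness argument'' as if it were off-the-shelf, but in this CR setting it is precisely Lemma~\ref{Concentration-Compactness}, and its proof is the technical heart of the paper: one renormalizes the flow by elements $h_{p(t),r(t)}\in Aut(S^{3})$ so that $v(t)=u\circ h_{p(t),r(t)}+\tfrac12\ln J(h_{p(t),r(t)})$ has vanishing first moments $\int_{S^3}\zeta_i e^{2v(t)}=0$, and then applies the \emph{improved} Moser--Trudinger inequality of Corollary~\ref{imp} (with constant $a<\tfrac{1}{16\pi^{2}}$, established in the Appendix via the Chang--Hang argument) to deduce $\|v(t)\|_{H}\leq C$. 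The ``schematic'' inequality you quote is the plain Beckner--Onofri bound of Theorem~\ref{thmbran}; it does not by itself give compactness of the renormalized flow, since the constant $\tfrac{1}{16\pi^{2}}$ is critical. Without the improved constant on the moment-normalized functions, the dichotomy does not close, and the assertion that the measure $e^{2u(t_{n})}$ either stays bounded or concentrates at a single point is unsupported.

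A secondary imprecision is the claim that concentration forces $\int_{M}e^{2u(t_{n})}$ to ``remain of order $16\pi^{2}/f(x_{0})$.'' The constraint $N(u)=16\pi^{2}$ controls only $\int f e^{2u}$, not $\int e^{2u}$, and these need not be comparable in general. The paper's contradiction is more direct: from $\int_{B_{r}(p_{0})}fe^{2u(t_{n})}\to 16\pi^{2}$ one has $16\pi^{2}\leq\bigl(\sup_{B_{r}(p_{0})}f\bigr)\int_{S^{3}}e^{2u(t_{n})}$, and the plain Beckner--Onofri bound $\int_{S^{3}}e^{2u}\leq V\,e^{E(u_{0})/V}$ with $V=16\pi^{2}$ then yields $1< f(p_{0})e^{E(u_{0})/16\pi^{2}}$ after $n\to\infty$, $r\to 0$, contradicting the hypothesis. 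Your phrasing via ``lower semicontinuity of $E$ along blow-up profiles'' is not needed and, as written, only delivers $f(x_{0})\geq e^{-E(u_{0})/16\pi^{2}}$, which is not immediately a contradiction with the non-strict hypothesis; the paper secures strictness from the strict decrease of $E$ along a non-stationary flow. The rest of your proposal---symmetry propagation, the Lagrange multiplier identification, and the Lojasiewicz--Simon rate---is correct and follows the paper.
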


\vspace{5mm}

{\noindent\bf Acknowledgment}

\noindent
The authors wants to express their gratitude to the referee for his/her careful reading of the paper: the remarks and the suggestions led to a serious improvement of the paper.


\section{Some definition in pseudo-Hermitian geometry}

\noindent
We will follow the notations in \cite{CaYa}. Let $M^3$ be a smooth, oriented three-dimensional manifold.  A CR structure on $M$ is a one-dimensional complex sub-bundle $T^{1,0}M\subset T_{\C}M:= TM\otimes\C$ such that $T^{1,0}M\cap T^{0,1}M=\{0\}$ for $T^{0,1}M:=\overline{T^{1,0}M}$.  Let $\mathcal{H}=Re T^{1,0}M$ and let $J\colon \mathcal{H}\to \mathcal{H}$ be the almost complex structure defined by $J(Z+\bar Z)=i(Z-\bar Z)$, for all $Z\in T^{1,0}M$. The condition that $T^{1,0}M\cap T^{0,1}M=\{0\}$ is equivalent to the existence of a contact form $\theta$ such that $\ker \theta =\mathcal{H}$. We recall that a 1-form $\theta$ is said to be a contact form if $\theta\wedge d\theta$ is a volume form on $M^{3}$. Since $M$ is oriented, a contact form always exists, and is determined up to multiplication by a positive real-valued smooth function. We say that $(M^3,T^{1,0}M)$ is strictly pseudo-convex if the Levi form $d\theta(\cdot,J\cdot)$ on $\mathcal{H}\otimes \mathcal{H}$ is positive definite for some, and hence any, choice of contact form $\theta$. We shall always assume that our CR manifolds are strictly pseudo-convex.\\
Notice that in a CR-manifold, there is no canonical choice of the contact form $\theta$. A pseudo-Hermitian manifold is a triple $(M^3,T^{1,0}M,\theta)$ consisting of a CR manifold and a contact form. The Reeb vector field $T$ is the vector field such that $\theta(T)=1$ and $d\theta(T,\cdot)=0$. The choice of $\theta$ induces a natural $L^{2}$-dot product $\langle\cdot,\cdot\rangle$, defined by
$$\langle f,g\rangle =\int_{M}f(x)g(x)\ \theta\wedge d\theta.$$
A $(1,0)$-form is a section of $T_{\C}^\ast M$ which annihilates $T^{0,1}M$.  An admissible coframe is a non-vanishing $(1,0)$-form $\theta^1$ in an open set $U\subset M$ such that $\theta^1(T)=0$.  Let $\theta^{\bar 1}:=\overline{\theta^1}$ be its conjugate.  Then $d\theta=ih_{1\bar 1}\theta^1\wedge\theta^{\bar 1}$ for some positive function $h_{1\bar 1}$.  The function $h_{1\bar 1}$ is equivalent to the Levi form.  We set $\{Z_1,Z_{\bar 1},T\}$ to the dual of $(\theta^{1},\theta^{\bar 1},\theta)$. The geometric structure of a CR manifold is determined by the  connection form $\omega_1{}^1$ and the torsion form $\tau_1=A_{11}\theta^1$ defined in an admissible coframe $\theta^1$ and is uniquely determined by
\begin{equation*}
\left\{\begin{array}{ll}
d\theta^1 = \theta^1\wedge\omega_1{}^1 + \theta\wedge\tau^1, \\
\omega_{1\bar 1} + \omega_{\bar 11} = dh_{1\bar 1},
\end{array}
\right.
\end{equation*}
where we use $h_{1\bar 1}$ to raise and lower indices.  The connection forms determine the pseudo-Hermitian connection $\nabla$, also called the Tanaka-Webster connection, by
\[ \nabla Z_1 := \omega_1{}^1\otimes Z_1. \]
The scalar curvature $R$ of $\theta$, also called the Webster curvature, is given by the expression
 \[ d\omega_1{}^1 = R\theta^1\wedge\theta^{\bar 1} \mod\theta . \]
\begin{definition}
A real-valued function $w\in C^\infty(M)$ is CR pluriharmonic if locally $w=Re f$ for some complex-valued function $f\in C^\infty(M,\C)$ satisfying $Z_{\bar 1}f=0$.
\end{definition}

\noindent
Equivalently, \cite{Lee}, $w$ is a CR pluriharmonic function if
\[ P_{3} w:=\nabla_1\nabla_1\nabla^1 w + iA_{11}\nabla^1 w = 0 \]
for $\nabla_1:=\nabla_{Z_1}$. We denote by $\mathcal{P}$ the space of all CR pluriharmonic functions and $\hat{\P}$ the completion of $\P$ in $L^{2}(M)$, also called the space of $L^{2}$ CR pluriharmonic functions. Let $\Gamma: L^{2}(M)\to \hat{\mathcal{P}}$ be the orthogonal projection on the space of $L^{2}$ pluriharmonic functions. If $S:L^{2}(M)\to \ker \bar\partial_{b}$ denotes the Szego kernel, then
\begin{equation}\label{szd}
\Gamma=S+\bar S+F,
\end{equation}
where $F$ is a smoothing kernel as shown in \cite{H}.
The Paneitz operator $P_\theta$ is the differential operator
\begin{align*}
P_\theta(w) & := 4\text{div}(P_{3}w) \\
& = \Delta_b^2w + T^2 - 4\text{Im} \nabla^1\left(A_{11}\nabla^1f\right)
\end{align*}
for $\Delta_b:=\nabla^1\nabla_1+\nabla^{\bar 1}\nabla_{\bar 1}$ the sub-Laplacian.  In particular, $\mathcal{P}\subset\ker P_\theta$. Hence, $\ker P_{\theta}$ is infinite dimensional. For a thorough study of the analytical properties of $P_{\theta}$ and its kernel, we refer the reader to \cite{H,CCY,CaYa1}. The main property of the Paneitz operator $P_{\theta}$ is that it is CR covariant \cite{Hir2}. That is, if $\hat\theta=e^w\theta$, then $e^{2w} P_{\hat{\theta}}=P_\theta$.
\begin{definition}\label{pprimep}
Let $(M^3,T^{1,0}M,\theta)$ be a pseudo-Hermitian manifold. The Paneitz type operator $P_{\theta}^\prime\colon\mathcal{P}\to C^\infty(M)$ is defined by
\begin{align}
P_\theta^\prime f & = 4\Delta_b^2 f - 8 \textnormal{Im}\left(\nabla^\alpha(A_{\alpha\beta}\nabla^\beta f)\right) - 4 \textnormal{Re}\left(\nabla^\alpha(R\nabla_\alpha f)\right) \notag\\
& \quad + \frac{8}{3}\textnormal{Re} (\nabla_\alpha R - i\nabla^\beta A_{\alpha\beta})\nabla^\alpha f - \frac{4}{3}f\nabla^\alpha( \nabla_\alpha R - i\nabla^\beta A_{\alpha\beta})\label{pprime1}
\end{align}
for $f\in\mathcal{P}$.
\end{definition}

\noindent
The main property of the operator $P_{\theta}^\prime$ is its ``almost" conformal covariance as shown in \cite{BG,CaYa}. That is if  $(M^3,T^{1,0}M,\theta)$ is a pseudo-Hermitian manifold, $w\in C^\infty(M)$, and we set $\hat\theta=e^w\theta$, then
\begin{equation} \label{pprime}
e^{2w}P_{\hat{\theta}}^\prime(u) = P_\theta^\prime(u) + P_\theta\left(uw\right)
\end{equation}
for all $u\in\mathcal{P}$.  In particular, since $P_\theta$ is self-adjoint and $\mathcal{P}\subset \ker P_{\theta}$, we have that the operator $P^\prime$ is conformally covariant, mod $\mathcal{P}^\perp$.
\begin{definition}
A pseudo-Hermitian manifold $(M^3,T^{1,0}M,\theta)$ is pseudo-Einstein if
$$\nabla_\alpha R - i\nabla^\beta A_{\alpha\beta}=0 .$$
\end{definition}

\noindent
Moreover, if $\theta$ induces a pseudo-Einstein structure then $e^{u}\theta$ is pseudo-Einstein if and only if $u\in \mathcal{P}$. The definition above was stated in \cite{CaYa}, but it was implicitly mentioned in \cite{Hir2}. In particular, if $(M^3,T^{1,0}M,\theta)$ is pseudo-Einstein, then $P_{\theta}^\prime$ takes a simpler form:
$$P_\theta^\prime f = 4\Delta_b^2 f - 8 \text{Im}\left(\nabla^1(A_{11}\nabla^1 f)\right) - 4\text{Re}\left(\nabla^1(R\nabla_1 f)\right).$$
In particular, one has
$$\int_{M}uP'_{\theta}u \geq 4\int_{M}|\Delta_{b}u|^{2} -C\int_{M}|\nabla_{b}u|^{2}.$$
Using the interpolation inequality $$\int_{M}|\nabla_{b}u|^{2}\leq C\|u\|_{L^{2}}\|\Delta_{b}u\|_{L^{2}},$$
and $2ab\leq \varepsilon a^{2}+\frac{1}{\epsilon} b^{2}$, we have the existence of $C_{1}>0$ and $C_{2}>0$, such that
$$ \int_{M}uP'_{\theta}u \geq C_{1}\int_{M}|\Delta_{b}u|^{2}-C_{2}\int_{M}u^{2}.$$
Hence, if $P'_{\theta}$ is non-negative, with trivial kernel, one has the equivalence of the Folland-Stein Sobolev norm and $(\ref{norm})$.
 
\begin{definition}\label{qprime}
Let $(M^3,T^{1,0}M,\theta)$ be a pseudo-Einstein manifold. The $Q^\prime$-curvature is the scalar quantity defined by
\begin{equation}\label{qprime1}
Q_\theta^\prime = 2\Delta_b R - 4 |A|^2 + R^2.
\end{equation}
\end{definition}

\noindent
The main equation that we will be dealing with is the change of the $Q^{\prime}$-curvature under conformal change. Let $(M^3,T^{1,0}M,\theta)$ be a pseudo-Einstein manifold, let $w\in\mathcal{P}$, and set $\hat\theta=e^w\theta$. Hence $\hat\theta$ is pseudo-Einstein. Then \cite{BG,CaYa}
\begin{equation}
 e^{2w} Q_{\hat{\theta}}^\prime = Q_\theta^\prime + P_\theta^\prime(w) + \frac{1}{2}P_\theta\left(w^2\right) .
\end{equation}
In particular, $Q_\theta^\prime$ behaves as the $Q$-curvature for $P_\theta^\prime$, mod $\mathcal{P}^\perp$. Since we are working modulo $\mathcal{P}^{\perp}$ it is convenient to project the previously defined quantities on $\hat{\mathcal{P}}$. So we define the operator $\overline{P}_{\theta}^{\prime}=\Gamma \circ P_{\theta}^{\prime}$ and the $\overline{Q}^{\prime}$-curvature by $\overline{Q}^{\prime}_{\theta}=\Gamma( Q^{\prime}_{\theta})$. Notice that
$$\int_{M}Q^{\prime}_{\theta}\ \theta\wedge d\theta =\int_{M}\overline{Q}^{\prime}_{\theta}\ \theta\wedge d\theta.$$
Moreover, the operator $\bP_{\theta}$ has many interesting analytical properties. Indeed, $\bP_{\theta}:\mathcal{P}\to \hat{\mathcal{P}}$ is an elliptic pseudo-differential operator (see \cite{CaYa2}) and if we assume that $\ker \bP_{\theta}=\R$, then its Green's function $G$ satisfies
$$\bP_{\theta}G(\cdot,y)=\Gamma(\cdot,y)-\frac{1}{V},$$
where $V=\int_{M}\theta\wedge d\theta$ is the volume of $M$ and $\Gamma(\cdot,\cdot)$ is the kernel of the projection operator $\Gamma$. Moreover,
$$G(x,y)=-\frac{1}{4\pi^{2}}\ln(|xy^{-1}|)+\mathcal{K}(x,y),$$
where $\mathcal{K}$ is a bounded kernel as proved in \cite{CCY2}.


\section{Preliminary results on the flow}

\noindent
First we recall one fundamental inequality that we will be using all along this paper, namely the CR version of the Beckner-Onofri inequality. This inequality was first proved in the odd dimensional spheres in \cite{Bran} and then naturally extended to pseudo-Einstein 3-manifolds in \cite[Theorem~3.1]{CaYa2}.

\begin{theorem}\label{thmbran}
Assume that $\overline{P}'$ is non-negative and $\ker\overline{P}'=\R$. Then, there exists $C>0$  such that for all $u\in \hat{\mathcal{P}}\cap \mathcal{S}^{2}(M)$ with $\displaystyle\int_{M}u=0$, we have
$$\frac{1}{16\pi^{2}}\int_{M}u\bP u+C\geq \ln\left(\dashint_{M}e^{2u}\right).$$
\end{theorem}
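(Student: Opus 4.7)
The plan is to reduce the inequality, via the Green's function representation of $\bP$, to a sharp Adams--Moser--Trudinger inequality for the logarithmic potential on $M$, and then to import the sharp constant $16\pi^{2}$ from Branson's result on $S^{3}$ by a localization/transfer argument.

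First, I would use the hypotheses $\bP\geq 0$ and $\ker \bP=\R$, which guarantee that $\bP$ is invertible on $\{u\in\hat{\P}:\int_{M}u=0\}$. For such $u$, one has
$$u(x)=\int_{M}G(x,y)\bP u(y)\,\theta\wedge d\theta(y),$$
and plugging in the decomposition $G(x,y)=-\tfrac{1}{4\pi^{2}}\ln|xy^{-1}|+\mathcal{K}(x,y)$ with $\mathcal{K}$ bounded, as recalled in the previous section, yields a splitting $u=u_{\mathrm{log}}+u_{\mathrm{bd}}$. Since $\int_{M}\bP u=0$, the bounded kernel $\mathcal{K}$ contributes only a pointwise bounded perturbation $u_{\mathrm{bd}}$; after exponentiation this produces a multiplicative factor bounded above and below, which can be absorbed into the additive constant $C$. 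The problem thus reduces to controlling
$$\ln\dashint_{M}\exp\!\Big(-\tfrac{1}{2\pi^{2}}\int_{M}\ln|xy^{-1}|\,f(y)\,d\mu(y)\Big)\,dV(x)$$
in terms of $\int_{M}f\,\bP^{-1}f=\int_{M}u\bP u$, with $f=\bP u$ of zero average.

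Next, I would invoke a sharp logarithmic Adams/Moser--Trudinger inequality in this CR setting: for $f\in L^{2}$ with zero average and $\int_{M}f\,\bP^{-1}f\leq 1$, the convolution $\int_{M}\ln|xy^{-1}|^{-1}f(y)\,d\mu(y)$ is exponentially integrable with critical constant dictated by the coefficient $-1/(4\pi^{2})$. A non-sharp version follows from Adams-type estimates in the Folland--Stein Sobolev space $\S^{2}(M)$. To obtain the optimal constant $16\pi^{2}$, one transfers from the model case $S^{3}$: Branson \cite{Bran} established this sharp inequality on the standard CR sphere using the $SU(2,1)$-representation theory of the intertwining operator $P^{\prime}$, and a concentration-compactness argument, rescaling in Heisenberg coordinates around any possible concentration point and invoking the Cayley transform identification of $S^{3}\setminus\{\mathrm{pt}\}$ with the Heisenberg group, shows that any failure of the sharp inequality on $M$ would produce a violator on $S^{3}$.

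The main obstacle is precisely this sharpness of the constant $16\pi^{2}$. Obtaining a bounded additive constant together with a non-sharp exponential integrability is a standard consequence of the Folland--Stein embedding and the logarithmic nature of $G$. The delicate step is excluding bubbling with the wrong concentration mass: here the $L^{2}$-projection structure $\bP=\Gamma\circ P^{\prime}$ together with the decomposition $\Gamma=S+\bar S+F$ with $F$ smoothing, and the embeddability of $M$ (implicit in our assumptions on $\bP$ via \cite{Chan}), are crucially used to reduce the local concentration behavior to the Heisenberg/sphere model, where Branson's sharp result applies.
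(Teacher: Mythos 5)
This theorem is not proved in the paper at all: it is recalled as a known fact, with the sphere case attributed to Branson--Fontana--Morpurgo \cite{Bran} and the extension to general pseudo-Einstein three-manifolds to Case--Hsiao--Yang \cite[Theorem~3.1]{CaYa2}. So there is no ``paper's own proof'' to measure your argument against; what you have written is a sketch of how one might reconstruct the cited result.

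As a sketch, the broad outline (Green's function representation, logarithmic leading singularity $-\tfrac{1}{4\pi^{2}}\ln|xy^{-1}|$, reduction to an Adams-type exponential estimate, sharp constant governed by the coefficient of the log) is the right skeleton and does reflect the structure of the argument in \cite{CaYa2}. However there are real gaps. First, the claim that the bounded-kernel piece $u_{\mathrm{bd}}(x)=\int_{M}\mathcal{K}(x,y)\bP u(y)$ is ``pointwise bounded'' and absorbed into $C$ is not justified: pointwise boundedness would require a bound on $\|\bP u\|_{L^{1}}$, whereas the available quantity is $\int_{M}u\bP u$, and these are not comparable. Second, and more seriously, the entire burden of the theorem is the sharp coefficient $\frac{1}{16\pi^{2}}$, and the step you propose for this --- a concentration-compactness / Cayley-transform ``transfer'' asserting that any failure on $M$ would produce a violator on $S^{3}$ --- is a heuristic, not an argument. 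It requires, at a minimum, matching the \emph{local} behavior of $\bP=\Gamma\circ P'$ near the would-be concentration point with the model operator on the Heisenberg group; but $\Gamma$ is a genuinely nonlocal Szeg\H{o}-type projector, and while you gesture at the decomposition $\Gamma=S+\bar S+F$, nothing is said about why the nonlocal tail does not spoil the blow-up limit or change the extremal constant. In fact the cited proof does not use a transfer/concentration-compactness argument to \emph{establish} the inequality; it proves a sharp Adams inequality directly for the kernel of $\bP^{-1}$ via the Fontana--Morpurgo abstract framework once the precise leading asymptotics of $G$ are in hand (the expansion $G(x,y)=-\tfrac{1}{4\pi^{2}}\ln|xy^{-1}|+\mathcal{K}$, $\mathcal{K}$ bounded, recalled in Section~2 from \cite{CCY2}, is exactly the input needed). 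Concentration-compactness is useful for showing compactness of extremizing sequences \emph{after} the inequality is known with its sharp constant, not for proving the inequality itself.
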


\noindent
In the case of the sphere, $C$ can be taken to be $0$ and equality holds if and only if $u=J(h)$ with $h\in Aut(S^{3})$ and $J(h)=det(Jac(h))$ is the determinant of the Jacobian determinant of $h$. The dual version of the above inequality was also investigated in \cite{M2}, where the existence of extremals was investigated.\\
Now, we prove the global existence of solutions of (\ref{flowequation}):
\begin{lemma}\label{globalexistence}
Let $(M,T^{1,0}M,\theta)$ be a pseudo-Einstein CR three dimensional manifold such that $\bP$ is non-negative and $\ker \bP=\R$. Let $f\in C(M)\cap \hat{\P}$ as in (\ref{hypotheses}). Then for any $u_0\in X$ there exists a solution $u\in C^{\infty}([0,\infty),H)$ of problem (\ref{flowequation}) such that $u(t) \in X$, for all $t\geq 0$. Moreover it holds
$$\int_{0}^{t}\|\partial_{s}u(s)\|^{2}ds=E(u_{0})-E(u(t)),$$
for all $t\geq 0$.
\end{lemma}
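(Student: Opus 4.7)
The plan is to establish, in order: local-in-time existence and uniqueness of a smooth solution on the Hilbert submanifold $X$, the invariance of $X$ together with the energy dissipation identity, and finally the extension of the local solution to all positive times via an a priori bound on $\|u(t)\|$.

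For short-time existence, recall that $\nabla N(u)\neq 0$ on $X$ (as already argued above), so $X$ is a smooth codimension-one submanifold of $H$. The tangential vector field $-\nabla^{X}E$ is smooth on $X$: the map $u\mapsto e^{2u}$ is smooth from $H$ into every $L^{p}(M)$ by the Beckner--Onofri integrability in Theorem \ref{thmbran} combined with the Folland--Stein embedding, the projection $\Gamma$ is bounded on $L^{2}$, and $(\bP+I)^{-1}$ is a smoothing pseudodifferential operator. Hence the Cauchy--Lipschitz theorem on Hilbert manifolds produces a unique maximal solution $u\in C^{\infty}([0,T^{\ast}),H)$.

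Invariance of $X$ and the energy identity come from direct differentiation. Since $\nabla^{X}E$ is by definition the orthogonal projection of $\nabla E$ onto $T_{u}X=(\nabla N)^{\perp}$, one has
$$\frac{d}{dt}N(u)=\langle \nabla N,\partial_{t}u\rangle = -\langle \nabla N,\nabla^{X}E\rangle=0,$$
so $u(t)\in X$ throughout the interval of existence. Analogously
$$\frac{d}{dt}E(u)=\langle \nabla E,-\nabla^{X}E\rangle = -\|\nabla^{X}E\|^{2}=-\|\partial_{t}u\|^{2},$$
using that $\nabla E-\nabla^{X}E$ is parallel to $\nabla N$ and hence orthogonal to $\nabla^{X}E$. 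Integration in $t$ yields the stated identity.

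It remains to prove $T^{\ast}=+\infty$, and by the continuation principle it is enough to bound $\|u(t)\|_{H}$ on $[0,T^{\ast})$. The energy identity gives $E(u(t))\leq E(u_{0})$. Writing $u=\bar u+v$ with $\bar u=\dashint_{M} u$ and $\int_{M} v=0$, we have $E(u)=\int_{M} v\,\bP v+2\bar u\int_{M}\bQ$. Applying Theorem \ref{thmbran} to $v$,
$$\dashint_{M} e^{2v}\leq \exp\Bigl(\tfrac{1}{16\pi^{2}}\int_{M} v\,\bP v+C\Bigr),$$
while the constraint $u\in X$ reads $\int_{M}f e^{2u}=\int_{M}\bQ$, since constants lie in $\hat{\P}$ and therefore $\int_{M}\Gamma(g)=\int_{M}g$. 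Splitting $f=f^{+}-f^{-}$ and combining these ingredients in each of the three sign cases of (\ref{hypotheses}) yields two-sided bounds on $\bar u$ and on $\int_{M} v\,\bP v$, hence on $\|u(t)\|_{H}$. The main obstacle is exactly this a priori bound: the three sign scenarios have to be treated independently because the balance between the Onofri inequality and the integral constraint is different in each, and one must use $\inf f<0$ or $\sup f>0$ precisely when $\int_{M}\bQ$ has the opposite sign in order to control $\bar u$. Once the bound is in place, smoothness and global existence follow from standard Hilbert-manifold ODE continuation.
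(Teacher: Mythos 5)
Your short-time existence argument, the conservation of $N$, and the energy dissipation identity all match the paper's reasoning, but your treatment of the continuation to $T^{\ast}=+\infty$ has a genuine gap and in fact cannot work as stated.

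You try to rule out finite-time blow-up by deducing a uniform bound on $\|u(t)\|_{H}$ from the energy bound $E(u(t))\leq E(u_{0})$, Theorem \ref{thmbran}, and the constraint $\int_{M}fe^{2u}=\int_{M}\bQ$, treated case-by-case according to the sign of $\int_{M}\bQ$. Such a bound simply does not follow from the hypotheses of the lemma: it is precisely the a priori estimate the paper spends Sections~4--5 establishing, and in the negative case $\int_{M}\bQ<0$ it is only obtained under the additional smallness assumption $e^{\tau\|u_{0}\|^{2}}\sup_{M}f\leq C_{0}$ of Theorem~\ref{negativecase}. The lemma, by contrast, makes no such assumption — it must produce a global solution for every $u_{0}\in X$ and every $f$ satisfying (\ref{hypotheses}), including data for which $\|u(t)\|$ may well be unbounded. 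Your argument would show convergence of the flow unconditionally, which contradicts the structure of the rest of the paper.

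The correct (and simpler) mechanism is a Gronwall argument that does not require boundedness, only subexponential growth: from
$$\nabla E(u)=2(\bP+I)^{-1}(\bP u+\bQ)=2u-2(\bP+I)^{-1}u+2(\bP+I)^{-1}\bQ$$
one reads off $\|\nabla E(u)\|\leq C_{1}\|u\|+C_{2}$, whence $\|\partial_{t}u\|=\|\nabla^{X}E(u)\|\leq \|\nabla E(u)\|\leq C_{1}\|u\|+C_{2}$ and therefore
$$\partial_{t}\|u\|^{2}=2\langle u,\partial_{t}u\rangle\leq C_{3}\|u\|^{2}+C_{4}.$$
Gronwall's lemma then shows $\|u(t)\|$ stays finite on every bounded interval, so no blow-up occurs in finite time and the maximal solution is defined on $[0,\infty)$. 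The boundedness of $\|u(t)\|$ as $t\to\infty$, which you tried to establish here, is deferred to Corollary~\ref{average} and the case analysis, where it rightly requires the extra hypotheses.
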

\begin{proof}
Since all the functionals involved are regular, the short time existence of a solution $u$ for (\ref{flowequation}) is ensured by the Cauchy-Lipschitz Theorem. In order to extend it to all $t\geq 0$, we notice that
$$\|\partial_{t}u\| = \|\nabla^{X} E(u)\| \leq 2\|\nabla E(u)\|\leq C_{1}\|u\|+C_{2}.$$
Thus, since
$$\partial_{t}\|u\|^{2}=2\langle u, \partial_{t}u\rangle \leq C_{3}\|u\|^{2}+C_{4},$$
by Gronwall's lemma, the solution $u$ exists for all $t\geq 0$. In addition
$$\partial_t N(u)=\left\langle \nabla N (u), \partial_t u\right\rangle=-\left\langle \nabla N (u), \nabla^{X} E(u)\right\rangle=0, $$
therefore $u(t)\in X$, for all $t\geq 0$. Finally, we have
$$\partial_{t}E(u)=\langle \nabla E(u),\partial_{t}u\rangle =-\|\partial_{t}u\|^{2}.$$
Hence, $E$ is decreasing along the flow and the following energy identity holds
\begin{equation}\label{energy identity}
\int_{0}^{t}\|\partial_{s}u\|^{2}ds=E(u_{0})-E(u(t)).
\end{equation}
\end{proof}

\noindent
Next we prove the following lemma about the convergence
\begin{lemma}\label{convergence}
Let $(M,T^{1,0}M,\theta)$ be a pseudo-Einstein CR three dimensional manifold such that $\bP$ is non-negative and $\ker \bP=\R$. Let $f\in C(M)\cap \hat{\P}$ as in (\ref{hypotheses}) and let $u$ be the solution of problem (\ref{flowequation}) obtained in the previous Lemma (\ref{globalexistence}). If there exists a constant $C>0$ such that $\|u(t)\|\leq C$, for all $t\geq 0$, then when $t\to \infty$, $u(t)\to u_{\infty}$ in $H$ and $u_{\infty}$ solves the equation
$$\overline{P}'u+\overline{Q}'u=\lambda\Gamma (fe^{2u}),$$
for a certain $\lambda \in \R$.  Moreover, there exist constants $B,\beta>0$ such that
$$\|u(t)-u_{\infty}\|\leq B(1+t)^{-\beta},$$
for all $t\geq 0$.
\end{lemma}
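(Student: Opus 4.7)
The plan is to follow the standard Łojasiewicz–Simon scheme for gradient flows, adapted to the constrained setting on $X$ and to the sub-Riemannian analysis that governs $\bP$. The rough outline has three stages: (i) extract a subsequential limit $u_{\infty}$ that is a critical point of $E\vert_{X}$ and verify the Euler–Lagrange equation with a Lagrange multiplier $\lambda$; (ii) establish a Łojasiewicz–Simon inequality for $E$ on $X$ near $u_{\infty}$; (iii) use this inequality together with the energy identity of Lemma \ref{globalexistence} to promote subsequential convergence to full convergence in $H$ with a polynomial rate.

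For stage (i), the uniform bound $\|u(t)\|\leq C$ together with the energy identity $\int_{0}^{\infty}\|\partial_{s}u\|^{2}\,ds\leq E(u_{0})-\inf_{t}E(u(t))<\infty$ (the infimum is finite because $E$ is controlled by $\|u\|^{2}$) yields a sequence $t_{n}\to\infty$ with $\partial_{t}u(t_{n})\to 0$ in $H$. Up to subsequence, $u(t_{n})\rightharpoonup u_{\infty}$ weakly in $H$ and, by compact embedding of $\mathcal{S}^{2}(M)$ into $L^{p}$ and the Moser–Trudinger type estimate for $e^{2u}$ in \cite{CaYa2}, strongly enough to pass to the limit in $N(u)=\int_{M}\bQ$ and in $\nabla^{X}E$. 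Thus $u_{\infty}\in X$ and $\nabla^{X}E(u_{\infty})=0$, which by the Lagrange multiplier rule gives $(\bP+I)^{-1}(\bP u_{\infty}+\bQ)=\lambda(\bP+I)^{-1}\Gamma(fe^{2u_{\infty}})$ for some $\lambda\in\R$; invertibility of $\bP+I$ then yields the stated equation.

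For stage (ii), the key step is the constrained Łojasiewicz–Simon inequality: there exist $\theta\in(0,1/2]$, $\sigma>0$, $C>0$ such that for every $u\in X$ with $\|u-u_{\infty}\|<\sigma$
$$|E(u)-E(u_{\infty})|^{1-\theta}\leq C\,\|\nabla^{X}E(u)\|.$$
This rests on the fact that both $E$ and $N$ are real analytic maps on $H$ (they are a quadratic form plus a linear term, respectively an analytic composition of $u\mapsto e^{2u}$ with $\Gamma$), and that the Hessian of $E$ restricted to the tangent space $T_{u_{\infty}}X$ is Fredholm. The Fredholm property follows from writing $D^{2}E(u_{\infty})=2\bP$ on $\hat{\mathcal{P}}$, which has compact resolvent (being elliptic pseudo-differential with $\ker\bP=\R$ by hypothesis), so that the constraint's codimension-one projection yields a compact perturbation of a self-adjoint operator with closed range. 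The abstract Łojasiewicz–Simon theorem in Hilbert spaces (as formulated, e.g., by Chill for analytic functionals with Fredholm Hessian) then applies verbatim.

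In stage (iii) one sets $\phi(t)=(E(u(t))-E(u_{\infty}))^{\theta}$, differentiates using $\partial_{t}E=-\|\partial_{t}u\|^{2}=-\|\nabla^{X}E(u)\|^{2}$, and invokes the Łojasiewicz inequality to obtain $-\phi'(t)\geq(\theta/C)\|\partial_{t}u\|$ on the set where $u(t)$ lies in the $\sigma$-ball around $u_{\infty}$. A standard continuity argument (once some $u(t_{n})$ enters a smaller ball, the $L^{1}$ bound on $\|\partial_{t}u\|$ keeps $u(t)$ trapped there) gives $\partial_{t}u\in L^{1}([T,\infty);H)$, hence the full convergence $u(t)\to u_{\infty}$ in $H$. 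The polynomial rate $\|u(t)-u_{\infty}\|\leq B(1+t)^{-\beta}$ follows from integrating the differential inequality $H'(t)\leq-c\,H(t)^{2(1-\theta)}$ satisfied by $H(t)=E(u(t))-E(u_{\infty})$, which gives $H(t)\lesssim(1+t)^{-1/(1-2\theta)}$ when $\theta<1/2$ (and exponential decay when $\theta=1/2$), and then bounding $\|u(t)-u_{\infty}\|\leq\int_{t}^{\infty}\|\partial_{s}u\|\,ds$ in terms of $H(t)^{\theta}$. The main obstacle, and the only point genuinely sensitive to the CR setting, is step (ii): one must verify that the analytic framework and the Fredholm assumption on $D^{2}E(u_{\infty})\vert_{T_{u_{\infty}}X}$ carry over in the sub-Riemannian context, which ultimately reduces to the elliptic pseudo-differential properties of $\bP$ on $\hat{\mathcal{P}}$ and the Moser–Trudinger/Beckner–Onofri estimate (Theorem \ref{thmbran}) controlling the nonlinearity $e^{2u}$.
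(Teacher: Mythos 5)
Your proposal follows the same Łojasiewicz--Simon scheme as the paper: extract a subsequence along which $\nabla^{X}E(u(t_{n}))\to 0$ via the energy identity, identify $u_{\infty}$ and the Lagrange multiplier $\lambda$, prove a Łojasiewicz--Simon inequality from analyticity of $E,N$ and Fredholmness of $(\nabla^{X})^{2}E(u_{\infty})$, then run the trapping argument and integrate the differential inequality for $E(u(t))-E(u_{\infty})$ to get the polynomial rate. The one step you pass over quickly --- upgrading the weak $H$-convergence of $u(t_{n})$ to strong $H$-convergence so that the flow genuinely enters the Łojasiewicz neighborhood --- is obtained in the paper by writing $\nabla E(u)=2u+2(\bP+I)^{-1}(\bQ-u)$ and combining $\nabla^{X}E(u(t_{k}))\to 0$ with the compactness of $(\bP+I)^{-1}$ and the strong $L^{p}$-convergence of $\Gamma(fe^{2u(t_{k})})$.
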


\begin{proof}
Since $\|u\|\leq C$, we have that
$$|E(u)|\leq 2\|u\|^{2}+C_{2}.$$
Therefore, by the previous energy estimate
$$\int_{0}^{\infty}\|\partial_{t}u\|^{2}dt <\infty.$$
So there exists a sequence $t_{k}\to \infty$ such that
$$\|\partial_{t}u(t_{k})\|=\| \nabla^{X} E(u(t_k))\|\to 0.$$
Now, from the boundedness of $\|u\|$, we also have the convergence $u(t_{k})\to u_{\infty}$ strongly in $L^{2}(M)$ and weakly in $\S^{2}(M)$. From Theorem \ref{thmbran}, we have that $e^{2u(t_{k})}\in L^{p}(M)$, with $p\geq 1$, and $\|e^{2u(t_{k})}\|_{L^{p}}$ is uniformly bounded. Thus by Egorov's lemma, we can deduce that
$$\left\|fe^{2u(t_{k})}-fe^{2u_{\infty}}\right\|_{L^{p}}\to 0, \; 1\leq p<\infty.$$
Indeed, we fix $\varepsilon>0$. Then, there exists a set $A$ with $Vol(A)<\varepsilon$ such that $fe^{2u(t_{k})}$ converges uniformly to $fe^{2u_{\infty}}$ on $M\setminus A$. Therefore,
$$\left\|fe^{2u(t_{k})}-fe^{2u_{\infty}}\right\|_{L^{p}}\leq  C\left\|fe^{2u(t_{k})}-fe^{2u_{\infty}}\right\|_{L^{\infty}(M\setminus A)}+\Big(\left\|fe^{2u(t_{k})}\right\|_{L^{\tilde{p}}}+\left\|fe^{2u_{\infty}}\right\|_{L^{\tilde{p}}}\Big)Vol(A)^{\frac{1}{p}-\frac{1}{\tilde{p}}},$$
for p<$\tilde{p}<\infty$. So the conclusion follows from the uniform boundedness of $\|e^{2u(t_{k})}\|_{L^{p}}$, for all $1\leq p<\infty$.\\

Thus $u_{\infty}\in X$. Now we have that
$$\nabla N (u(t_k))=2(\overline{P}'+I)^{-1}\Gamma \left(fe^{2u(t_k)}\right),$$
and since $fe^{2u(t_{k})}$ converges strongly in $L^{p}(M)$ and $\Gamma$ maps continuously $L^{p}(M)$ to $L^{p}(M)$ (this follows from (\ref{szd}) and \cite{PS}), we have by the compactness of $\overline{P}'+I$ that $\nabla N(u(t_{k})$ converges strongly to $\nabla N(u_{\infty})$. Also,
$$\nabla E(u(t))=2(\overline{P}'+I)^{-1}(\overline{P}'u+\overline{Q}')=2u(t)+2(\overline{P}'+I)^{-1}(\overline{Q}'-u).$$
Thus, since $\partial_{t}u(t_{k})\to 0$ in $\S^{2}(M)$, we have that $u(t_{k})$ converges strongly to $u_{\infty}$ and $ \nabla^{X} E(u_{\infty})=0$. Moreover, we have
$$(\overline{P}'+I)^{-1}(\overline{P}'u_{\infty}+\overline{Q}')=\lambda(u_{\infty})(\overline{P}'+I)^{-1}\Gamma(fe^{2u_{\infty}})$$
where
$$\lambda (u_{\infty})=\frac{\langle \nabla E(u_{\infty}),\nabla N(u_{\infty})\rangle}{\|\nabla N(u_{\infty})\|^{2}} .$$
Now by integration we have that
$$\int_{M}\overline{Q}'=\lambda(u_{\infty})\int_{M} \Gamma \left(fe^{2u_{\infty}}\right) ,$$
and since $u_{\infty}\in X$, we have if $\displaystyle\int_M \overline{Q}' \not =0$, that $\lambda(u_{\infty})=1$ and hence $u_{\infty}$ solves the desired equation. On the other hand, if $\displaystyle\int_{M}\overline{Q}'=0$, either $\lambda(u_{\infty})=0$ and thus
$$\overline{P}'u_{\infty}+\overline{Q}'=0,$$
or $\lambda(u_{\infty})>0$, thus setting $v=u_{\infty}+\frac{1}{2}\ln(\lambda(u_{\infty}))$ we have
$$\overline{P}'v+\overline{Q}'=\Gamma(fe^{2v}),$$
and similarly if $\lambda(u_{\infty})<0$, we have a function $v=u_{\infty}+\frac{1}{2}\ln(-\lambda(u_{\infty}))$ such that
$$\overline{P}'v+\overline{Q}'=-\Gamma(fe^{2v}).$$
In particular, if we assume that $\int_{M}fe^{2\ell}\not=0$ in the case $\lambda(u_{\infty})=0$, we have $u_{\infty}-\ell$ is constant. Hence, $$0=\int_{M}fe^{2u_{\infty}}=e^{2(u_{\infty}-\ell)}\int_{M}fe^{2\ell}\not=0,$$ which is a contradiction.\\
The polynomial convergence of the flow can be deduced from the Lojasiewicz-Simon inequality following Theorem 3 in \cite{simon} and Lemma 3.2 in \cite{BFR}. Let $\eta:H\to T_{u_{\infty}}X$ be the natural projection, where $T_{u_{\infty}}X$ denotes the tangent space of the manifold $X$ at the point $u_\infty$. We have, for $v\in T_{u_{\infty}}X$
$$\left(\nabla^{X}\right)^{2}E(u_{\infty})v=\eta \left(\nabla^{2}E(u_{\infty})v-\frac{\langle \nabla E(u_{\infty}),\nabla N(u_{\infty})\rangle}{\|\nabla N(u)\|^{2}}\nabla^{2}N(u_{\infty})v+R^{\perp}v\right)$$
where $R^{\perp}v$ is the component along $\nabla N(u_{\infty})$. Thus, since $\eta \left(R^{\perp}v\right)=0$, we have that
$$\left(\nabla^{X}\right)^{2}E(u_{\infty})v=2\left(I-\eta(\overline{P}'+1)\right)v-4\frac{\langle \nabla E(u_{\infty}),\nabla N(u_{\infty})\rangle}{\|\nabla N(u)\|^{2}}(\overline{P}'+1)^{-1}\Gamma (fe^{2u_{\infty}}v).$$
It can be checked that  $(\nabla^{X})^{2} E(u_{\infty}):T_{u_{\infty}}X\to T_{u_{\infty}}X$ is a Fredholm operator, then there exists a constant $\delta>0$ and $0<\kappa<\frac{1}{2}$ such that if $\|u(t)-u_{\infty}\|<\delta$, it holds
$$\|\nabla ^{X}E(u)\|\geq (E(u(t))-E(u_{\infty}))^{1-\kappa}.$$
We note that if $E(u(t_{0}))=E(u_{\infty})$ for some $t_0\geq0$, then the flow is stationary and the estimate is trivially satisfied. So we can assume that $E(u(t))-E(u_{\infty})>0$, for all $t\geq 0$. Since $\lim_{n\to \infty}\|u(t_{n})-u_{\infty}\|=0$, for a given $\varepsilon>0$, there exists $n_{0}>0$  such that for $n\geq n_{0}$ we have,
$$\|u(t_{n})-u_{\infty}\|<\frac{\varepsilon}{2}$$
and
$$\frac{1}{\kappa}(E(u(t_{n}))-E(u_{\infty}))^{\kappa}<\frac{\varepsilon}{2}.$$
We set $\varepsilon=\frac{\delta}{2}$ and
$$T:=\sup\left\{t\geq t_{n_{0}}; \|u(s)-u_{\infty}\|<\delta ;s\in[t_{n_{0}},t]\right\},$$
and we assume for the sake of contradiction that $T<\infty$. Now we have
$$-\partial_{t}[E(u(t))-E(u_{\infty})]^{\kappa}=-\kappa \partial_{t}E(u(t))[E(u(t))-E(u_{\infty})]^{\kappa-1},$$
but
$$-\partial_{t}E(u(t))=-\langle E(u),\partial_{t}u\rangle =\|\nabla^{X}E(u)\|\|\partial_{t}u\|.$$
Thus, for $t\in [t_{n_{0}},T]$ we have
$$-\partial_{t}[E(u(t))-E(u_{\infty})]^{\kappa}\geq \kappa \|\partial_{t}u\|,$$
and since $E$ is non-increasing along the flow, we have after integration in the interval $[t_{n_{0}},T]$
$$\|u(T)-u(t_{n_{0}})\|\leq \int_{t_{n_{0}}}^{T}\|\partial_{s}u\|ds\leq\frac{1}{\kappa}[E(u(t_{n_{0}}))-E(u_{\infty})]^{\kappa}<\frac{\varepsilon}{2} .$$
Hence,
$$\|u(T)-u_{\infty}\|\leq \|u(T)-u(t_{n_{0}})\|+\|u(t_{n_{0}})-u_{\infty}\|<\varepsilon=\frac{\delta}{2}$$
which is a contradiction and so $T=+\infty$. We set now $g(t)=E(u(t))-E(u_{\infty})$, for $t\in [t_{n_{0}},+\infty)$. Then we have
$$g'(t)=-\|\nabla^{X}E(u)\|^{2}\geq g^{2\kappa-1}(t).$$
By integration we have
$$g^{2\kappa-1}(t)\geq g^{2\kappa-1}(t_{n_{0}})+(1-2\kappa)(t-t_{n_{0}}).$$
Since $2\kappa-1<0$, we have
$$g(t)\leq [g^{2\kappa-1}(t_{n_{0}})+(1-2\kappa)(t-t_{n_{0}})]^{\frac{1}{2\kappa-1}}\leq Ct^{\frac{1}{2\kappa-1}} .$$
Now, by taking $t'>t$, we have
$$\|u(t)-u(t')\|\leq \int_{t}^{t'}\|\partial_{s}u\|ds\leq \frac{1}{\theta}[E(u(t))-E(u_{\infty})]^{\kappa}\leq \frac{1}{\kappa}g^{\kappa}(t)\leq Ct^{\frac{\kappa}{2\kappa-1}}.$$
For $t'=t_{n}$, letting $n\to \infty$ and setting $\beta=\frac{\kappa}{1-2\kappa}$, we get that for $t>t_{n_{0}}$
$$\|u(t)-u_{\infty}\|\leq Ct^{-\beta}$$
Therefore, since $\|u(t)-u_{\infty}\|$ is bounded for $t>t_{n_{0}}$, we have the existence of $B>0$ such that for all $t\geq 0$
$$\|u(t)-u_{\infty}\|\leq B(1+t)^{-\beta}.$$
\end{proof}

\begin{corollary}\label{average}
Let $(M,T^{1,0}M,\theta)$ be a pseudo-Einstein CR three dimensional manifold such that $\bP$ is non-negative and $\ker \bP=\R$. Let $f\in C(M)\cap \hat{\P}$ as in (\ref{hypotheses}) and let $u$ be the solution of problem (\ref{flowequation}) obtained in the Lemma (\ref{globalexistence}). If $\bar u=\displaystyle\frac{1}{V}\displaystyle\int_{M} u$ is uniformly bounded then the flow converges. Here $V=\displaystyle\int_M \theta\wedge d\theta$ is the volume of $M$.
\end{corollary}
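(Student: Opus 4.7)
The plan is to show that the hypothesis on $\bar{u}$ allows us to verify the uniform bound $\|u(t)\|\le C$ required by Lemma \ref{convergence}, and then invoke that lemma directly. Write $u=\bar{u}+w$ with $\int_{M}w=0$. Since constants lie in $\ker\bP$, we have
$$\int_{M}u\,\bP u = \int_{M}w\,\bP w,$$
and since $\bP$ is non-negative with $\ker\bP=\R$ and elliptic, a Poincar\'e-type inequality yields a constant $C_{P}>0$ such that $\|w\|_{L^{2}}^{2}\le C_{P}\int_{M}w\,\bP w$ for every $w$ with $\int_{M}w=0$.

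Next, using the energy identity (\ref{energy identity}) from Lemma \ref{globalexistence}, the functional $E$ is non-increasing along the flow, so $E(u(t))\le E(u_{0})$ for all $t\ge 0$. Expanding,
$$\int_{M}w\,\bP w + 2\bar{u}\int_{M}\bQ + 2\int_{M}\bQ\, w \;=\; E(u(t)) \;\le\; E(u_{0}).$$
The cross term $\int_{M}\bQ\, w$ is controlled by Cauchy--Schwarz and the Poincar\'e inequality:
$$\Bigl|2\int_{M}\bQ\, w\Bigr|\le 2\|\bQ\|_{L^{2}}\|w\|_{L^{2}}\le 2\sqrt{C_{P}}\,\|\bQ\|_{L^{2}}\Bigl(\int_{M}w\,\bP w\Bigr)^{1/2},$$
which by Young's inequality is absorbed into $\tfrac{1}{2}\int_{M}w\,\bP w$ plus an absolute constant. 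Combining with the assumed uniform bound on $\bar{u}$, we conclude that $\int_{M}w\,\bP w$ is uniformly bounded in $t$, and hence so is $\|w\|_{L^{2}}^{2}$ by Poincar\'e.

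Finally, $\|u\|^{2}=\int_{M}u\,\bP u+\int_{M}u^{2}=\int_{M}w\,\bP w+\int_{M}w^{2}+V\bar{u}^{2}$, and each summand is uniformly bounded. Applying Lemma \ref{convergence} gives $u(t)\to u_{\infty}$ in $H$ with the polynomial rate $\|u(t)-u_{\infty}\|\le B(1+t)^{-\beta}$. The only subtle point, which is essentially routine here because $\bP$ is elliptic with one-dimensional kernel, is to justify the Poincar\'e inequality used above; this follows from the spectral decomposition of $\bP$ acting on $\hat{\mathcal{P}}\ominus\R$, whose first eigenvalue is strictly positive.
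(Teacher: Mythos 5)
Your proof is correct and follows essentially the same route as the paper's: both use the monotonicity of $E$ from the energy identity, the Poincar\'e-type inequality for $\bP$ on mean-zero functions, and Young's/Cauchy--Schwarz to absorb the cross term $\int_M\bQ\,(u-\bar u)$, then conclude via Lemma \ref{convergence}. The only cosmetic difference is that you write the decomposition $u=\bar u+w$ explicitly while the paper works directly with $u-\bar u$.
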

\begin{proof}
From the energy identity (\ref{energy identity}) we have that
$$\int_{M}u\overline{P}'u+2\int_{M}\overline{Q}'u \leq E(u_{0}),$$
but we also have from the Poincar\'{e}-type inequality (or the non-negativity of the operator $\overline{P}'$), that
$$\int_{M}u\overline{P}'u\geq \lambda_{1}\int_{M}(u-\bar{u})^{2}.$$
Here $\lambda_{1}$ is the first non-zero eigenvalue of the operator $\overline{P}'$. In particular, from Young's inequality, we have that
$$\int_{M}u\overline{P}'u\leq E(u_{0})+\varepsilon\int_{M}(u-\bar{u})^{2}+C(\varepsilon)\|\overline{Q}'\|_{L^{2}}^{2} - 2\bar u \int_{M}\overline{Q}'$$
Hence, for $\varepsilon$ small enough, we have that
$$\int_{M}u\overline{P}'u\leq C,$$
since $\bar{u}$ is uniformly bounded, then the uniform boundedness of $\|u\|$ and the conclusion follows from Lemma \ref{convergence}.
\end{proof}

\noindent
Therefore, in the rest of the paper,  we will show the uniform boundedness of $\bar{u}$ along the flow, in order to have convergence at infinity.


\section{The sub-critical case}

\noindent
Along all this section we will assume that $\bP$ is non-negative and $\ker \bP=\R$. Next we consider the three separate cases in which $\displaystyle\int_{M}\bQ <16\pi^{2}$. Also we let $V=\displaystyle\int_M \theta\wedge d\theta$ be the volume of $M$.


\subsection{Case $\displaystyle\int_{M}\bQ<0$ and proof of Theorem \ref{negativecase}}

\begin{lemma}\label{concentration1}
There exists a positive constant $C>0$ depending on $M$ and $\theta$ such that for any measurable subset $K\subset M$ with $Vol(K)>0$, we have
$$\int_{M}u\leq |E(u_{0})|+\frac{C}{Vol(K)}+\frac{4 V}{Vol(K)}\max\left(\int_{K}u,0\right)$$
\end{lemma}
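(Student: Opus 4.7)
The plan is to do a case split based on comparing the global average $\bar{u} := \frac{1}{V}\int_M u$ with the $K$-average $\bar{u}_K := \frac{1}{Vol(K)}\int_K u$. Writing $v = u - \bar{u}$, we have $\int_M v = 0$ and the identity $\int_K v = Vol(K)(\bar{u}_K - \bar{u})$. \textbf{Case 1.} If $\bar{u} \leq 4\max(\bar{u}_K, 0)$, then
$$\int_M u = V\bar{u} \leq 4V\max(\bar{u}_K, 0) = \frac{4V}{Vol(K)}\max(\int_K u, 0),$$
so the bound holds at once (the terms $|E(u_0)|$ and $C/Vol(K)$ on the right-hand side being non-negative).

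\textbf{Case 2.} If $\bar{u} > 4\max(\bar{u}_K, 0)$, then in particular $\bar{u} > 0$ and $\bar{u} - \bar{u}_K > \tfrac{3}{4}\bar{u}$, so $|\int_K v| = Vol(K)(\bar{u} - \bar{u}_K) \geq \tfrac{3}{4}Vol(K)\bar{u}$. Combining this with Cauchy--Schwarz and the Poincar\'e inequality $\|v\|_{L^2}^2 \leq \lambda_1^{-1}\int_M v\bP v$ (available since $\bP\geq 0$ with $\ker\bP=\R$, so $\bP$ has a spectral gap $\lambda_1>0$ on the mean-zero subspace), one obtains
$$\int_M v\bP v \geq \tfrac{9}{16}\lambda_1 Vol(K)\bar{u}^{2}.$$
The matching upper bound on $\int_M v\bP v$ comes from the energy: by Lemma \ref{globalexistence}, $E(u) \leq E(u_0)$, hence $\int_M v\bP v + 2\int_M \bQ u \leq E(u_0)$. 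Splitting $\bQ = I/V + \bQ_0$ with $I = \int_M \bQ < 0$ and $\int_M \bQ_0 = 0$, using $-I\bar{u} = |I|\bar{u}$, and controlling $|\int_M \bQ_0 v|$ by Cauchy--Schwarz, Poincar\'e, and Young's inequality (with parameter $1/4$, to absorb a $\tfrac{1}{4}\int_M v\bP v$ back to the left), one gets
$$\int_M v\bP v \leq \tfrac{4}{3}|E(u_0)| + \tfrac{8}{3}|I|\bar{u} + c_0,$$
for some $c_0 = c_0(M,\theta)$.

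Combining the two bounds gives a quadratic inequality in $\bar{u}$:
$$\tfrac{9}{16}\lambda_1 Vol(K)\bar{u}^{2} \leq \tfrac{4}{3}|E(u_0)| + \tfrac{8}{3}|I|\bar{u} + c_0.$$
Solving via the standard quadratic formula and the elementary estimate $\sqrt{a+b}\leq\sqrt{a}+\sqrt{b}$ produces $V\bar{u}$ as a sum of a $V|I|/Vol(K)$-type term, a $V\sqrt{|E(u_0)|/Vol(K)}$-type term, and a $V/\sqrt{Vol(K)}$-type term. A single application of Young's inequality with parameter $\eta \sim 1/V$ splits the middle term into exactly $|E(u_0)|$ plus a $1/Vol(K)$-contribution; a secondary Young (using $1/\sqrt{Vol(K)} \leq 1 + 1/Vol(K)$, which holds since $Vol(K)\leq V$) absorbs the remaining $1/\sqrt{Vol(K)}$ factor into $1/Vol(K)$. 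The outcome is
$$V\bar{u} \leq |E(u_0)| + \frac{C}{Vol(K)},$$
with $C$ depending only on $M$ and $\theta$ (via $V$, $\lambda_1$, $|I|$, and $\|\bQ_0\|_{L^2}$). Combining with Case~1 gives the lemma.

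The main technical obstacle is the tuning of Young's inequality in the quadratic solving step: the intermediate estimate naturally features a $\sqrt{|E(u_0)|/Vol(K)}$ factor, and only the specific parameter choice $\eta \sim 1/V$ delivers an \emph{unweighted} $|E(u_0)|$ together with all remaining $Vol(K)$-dependence collapsing into the single $C/Vol(K)$ term. The explicit constant $4V/Vol(K)$ in the statement is dictated by the threshold $\bar{u} = 4\bar{u}_K$ chosen for the case split.
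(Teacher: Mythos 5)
Your proof is correct, and it reaches the estimate via the same core ingredients the paper uses --- the energy decay $E(u(t))\leq E(u_0)$ from Lemma~\ref{globalexistence}, the spectral-gap (Poincar\'e) inequality for $\bP$, Cauchy--Schwarz, and Young --- but organized around a genuinely different case split. The paper distinguishes $\int_K u\leq 0$ from $\int_K u>0$: in the first case it bounds $(\int_M u)^2\leq Vol(K^c)\int_M u^2$, and in the second it expands $(\int_M u)^2=\bigl(\int_K u+\int_{K^c}u\bigr)^2$ and controls the cross term, so that $\int_K u$ is carried explicitly through the inequalities. You instead split on $\bar{u}\leq 4\max(\bar{u}_K,0)$ versus $\bar{u}>4\max(\bar{u}_K,0)$, which renders the first case immediate (it is exactly what the $\frac{4V}{Vol(K)}\max(\int_K u,0)$ term is there to absorb), and in the second you lower-bound $\int_M v\bP v$ directly via the $K$-mass $|\int_K v|>\tfrac{3}{4}Vol(K)\bar{u}$, closing a single quadratic inequality in $\bar{u}$. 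Your route is tidier: only one case requires computation, the constant $4$ appears transparently as the threshold, and the somewhat delicate cross-term bookkeeping of the paper's second case is avoided. The paper's route stays closer to the literal shape of the claimed inequality and involves less tuning in the final Young step. One small point worth making explicit in your write-up: the leftover additive constant produced by $1/\sqrt{Vol(K)}\leq 1+1/Vol(K)$ must itself be absorbed into the $C/Vol(K)$ term, which is legitimate because $Vol(K)\leq V$ gives $c\leq cV/Vol(K)$ for any fixed $c\geq 0$; this is implicit in your argument but should be stated.
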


\begin{proof}
Without loss of generality we can assume that $\displaystyle\int_{M}u>0$ otherwise the inequality is trivially satisfied. First we have that
$$\int u\overline{P}'u\leq E(u_{0})-2\int_{M}\overline{Q}'u$$
and
$$\|u-\bar{u}\|_{L^{2}}^{2}\leq \frac{1}{\lambda_{1}}\int_{M}u\overline{P}'u .$$
Hence,
$$\int_{M}u^{2}\leq \frac{1}{\lambda_{1}}E(u_{0})-\frac{2}{\lambda_{1}}\int_{M}\overline{Q}'u +\frac{1}{V}\left(\int_{M}u\right)^{2}$$
Now if $\displaystyle\int_{K}u \leq 0$, then we have
$$\left(\int_{M}u\right)^{2}\leq \left(\int_{K^{c}}u\right)^{2}\leq Vol(K^{c})\int_{M}u^{2},$$
hence
$$\frac{Vol(K)}{V}\int_{M}u^{2}\leq \frac{1}{\lambda_{1}}E(u_{0})-\frac{2}{\lambda_{1}}\int_{M}\overline{Q}'u.$$
Again using Young's inequality we have
$$\int_{M}u^{2}\leq \frac{2V}{\lambda_{1}Vol(K)}E(u_{0})+\frac{4\|\overline{Q}'\|_{L^{2}}^{2}V^{2}}{\lambda_{1}^{2}Vol(K)^{2}},$$
but
\begin{align}
\left(\int_{M}u\right)^{2}&\leq V\int_{M}u^{2} \leq  \frac{2V^{2}}{\lambda_{1}Vol(K)}E(u_{0})+\frac{4\|\overline{Q}'\|_{L^{2}}^{2}V^{3}}{\lambda_{1}^{2}Vol(K)^{2}}\notag\\
&\leq |E(u_{0})|^{2}+\frac{V^{4}}{\lambda_{1}^{2}Vol(K)^{2}}+\frac{4\|\overline{Q}'\|_{L^{2}}^{2}V^{3}}{\lambda_{1}^{2}Vol(K)^{2}}\notag,
\end{align}
which yields
$$\int_{M}u \leq |E(u_{0})|+\frac{C}{Vol{K}}.$$
We assume now that $\displaystyle\int_{K}u> 0$. Then one has
$$\int_{M}u^{2}\leq \frac{1}{\lambda_{1}}E(u_{0})-\frac{2}{\lambda_{1}}\int_{M}\overline{Q}'u +\frac{1}{V}\left(\left(\int_{K}u\right)^{2}+\left(\int_{K^{c}}u\right)^{2}+2\int_{K^{c}}u\int_{K}u \right),$$
and
$$\frac{2}{V}\int_{K^{c}}u\int_{K}u\leq \frac{2Vol(K^{c})}{Vol(K)V}\left(\int_{K}u\right)^{2}+\frac{Vol(K)}{2V}\int_{M}u^{2}.$$
Hence,
$$\frac{Vol(K)}{2V}\int_{M}u^{2}\leq \frac{1}{\lambda_{1}}E(u_{0})-\frac{2}{\lambda_{1}}\int_{M}\overline{Q}'u+\frac{3}{Vol(K)}\left(\int_{K}u\right)^{2}.$$
By using that
$$\left|\frac{2}{\lambda_{1}}\int_{M}\overline{Q}'u\right|\leq \frac{Vol(K)}{4V}\int_{M}u^{2} +\frac{4V\|\overline{Q}'\|_{L^{2}}^{2}}{\lambda_{1}^{2}Vol(K)},$$
we have,
$$\int_{M}u^{2}\leq \frac{4V}{\lambda_{1}Vol(K)}|E(u_{0})|+\frac{16V^{2}\|\overline{Q}'\|_{L^{2}}^{2}}{\lambda_{1}^{2}Vol(K)^{2}}+ \frac{12V}{Vol(K)^{2}}\left(\int_{M}u\right)^{2}.$$
Hence,
$$\left(\int_{M}u\right)^{2}\leq  \frac{4V^{2}}{\lambda_{1}Vol(K)}|E(u_{0})|+\frac{16V^{3}\|\overline{Q}'\|_{L^{2}}^{2}}{\lambda_{1}^{2}Vol(K)^{2}}+
\frac{12V^{2}}{Vol(K)^{2}}\left(\int_{M}u\right)^{2},$$
and therefore
$$\int_{M}u\leq |E(u_{0})|+\frac{C}{Vol(K)}+\frac{4V}{Vol(K)}\int_{K}u .$$
\end{proof}

\begin{lemma}\label{concentration2}
Let $K$ be a measurable subset of $M$ such that $Vol(K)>0$. Then there exists a constant $\alpha>1$ depending on $M$ and $\theta$ and a constant $C_{K}>1$ depending on $Vol(K)$ such that
$$\int_{M}e^{2u}\leq C_{K}e^{\alpha\|u_{0}\|^{2}}\max\left(\left(\int_{K}e^{2u}\right)^{\alpha},1\right).$$
\end{lemma}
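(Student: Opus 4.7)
The plan is to apply the Beckner-Onofri inequality (Theorem \ref{thmbran}) to the zero-average function $u-\bar u$, where $\bar u = \frac{1}{V}\int_M u$. Since $\bP$ annihilates constants, $\int_M(u-\bar u)\bP(u-\bar u) = \int_M u\bP u$, and the inequality yields
$$\int_M e^{2u} = e^{2\bar u}\int_M e^{2(u-\bar u)} \leq V e^{C}\exp\Bigl(2\bar u + \tfrac{1}{16\pi^{2}}\textstyle\int_M u\bP u\Bigr),$$
so the task reduces to bounding the exponent by $\alpha\|u_0\|^2 + \alpha\ln\int_K e^{2u}$ up to $K$-dependent constants.

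First, I will control $\int_M u\bP u$ using the monotonicity $E(u(t))\leq E(u_0)$ from Lemma \ref{globalexistence}. Splitting $\int_M\bQ u = \int_M\bQ(u-\bar u) + \bar u\int_M\bQ$, applying Cauchy-Schwarz, the Poincar\'e inequality $\|u-\bar u\|_{L^2}^2 \leq \lambda_1^{-1}\int_M u\bP u$, and Young's inequality with parameter chosen so the Poincar\'e contribution is absorbed into $\tfrac12\int_M u\bP u$, I expect to obtain
$$\int_M u\bP u \leq 2|E(u_0)| + C - 4\bar u\int_M\bQ \leq C(\|u_0\|^2+1) + 4q\,|\bar u|,$$
where $q=|\int_M\bQ|$ and $|E(u_0)|\leq C\|u_0\|^2+C$. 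Since we are in the subsection $\int_M\bQ<0$, the term $-4\bar u\int_M\bQ$ is non-positive precisely when $\bar u\leq 0$, so in that case $\int_M u\bP u$ is already controlled by $\|u_0\|^2$ alone.

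To connect $\bar u$ with $\int_K e^{2u}$, I set $v_K = \frac{1}{Vol(K)}\int_K u$ and use Jensen's inequality on $K$ to obtain $2v_K\leq \ln(\int_K e^{2u}/Vol(K))$; Cauchy-Schwarz combined with Poincar\'e then yields
$$|\bar u - v_K| \leq \frac{\|u-\bar u\|_{L^2}}{\sqrt{Vol(K)}} \leq \frac{1}{\sqrt{\lambda_1 Vol(K)}}\sqrt{\textstyle\int_M u\bP u}.$$

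Finally I will split into two cases. If $\bar u \leq 0$, then $e^{2\bar u}\leq 1$ and the energy estimate gives $\int_M u\bP u\leq C(\|u_0\|^2+1)$, so Beckner-Onofri directly produces $\int_M e^{2u}\leq Ce^{\alpha\|u_0\|^2}$, which is absorbed by the $\max(\cdot,1)$ on the right. If $\bar u > 0$, substituting $\int_M u\bP u\leq C(\|u_0\|^2+1)+4q\bar u$ into the estimate on $|\bar u - v_K|$ and applying Young's inequality $c\sqrt{\bar u}\leq \bar u/2 + c^2/2$ produces a self-referential inequality for $\bar u$ which solves to $\bar u \leq 2v_K + C_K(\|u_0\|^2+1)$. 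Substituting back into the Beckner-Onofri estimate and invoking $2v_K\leq \ln(\int_K e^{2u}/Vol(K))$ gives the desired bound with exponent $\alpha = 2 + q/(4\pi^2) > 1$. Taking the maximum of the two exponents across the two cases produces a single $\alpha$ valid throughout. The main technical obstacle is solving the self-referential inequality for $\bar u$ cleanly in the positive case, and its resolution leans crucially on the hypothesis $\int_M\bQ<0$, which is what ensures the $\bar u\leq 0$ case yields a bound free of $\bar u$.
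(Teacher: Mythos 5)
Your approach is mathematically sound and genuinely distinct from the paper's route. The paper isolates a separate Lemma~\ref{concentration1} that bounds $\int_M u$ linearly in terms of $|E(u_0)|$, $\frac{1}{\mathrm{Vol}(K)}$, and $\max(\int_K u,0)$; this lemma is proved via an $L^2$ bootstrap, applying Cauchy--Schwarz to $\int_{K^c}u$ and splitting on the sign of $\int_K u$. Lemma~\ref{concentration2} is then almost immediate: Beckner--Onofri plus the energy identity gives $\int_M e^{2u}\leq C_1\exp\bigl(\frac{1}{8\pi^2}\|u_0\|^2+C_2\int_M u\bigr)$, and substituting Lemma~\ref{concentration1} followed by Jensen on $K$ finishes. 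You bypass Lemma~\ref{concentration1} entirely by comparing $\bar u$ with $v_K=\frac{1}{\mathrm{Vol}(K)}\int_K u$ via Cauchy--Schwarz on $K$ and Poincar\'e, and then closing a self-referential inequality (since your energy bound on $\int_M u\bP u$ itself contains a $\bar u$ term). Both arguments rest on the same ingredients: Theorem~\ref{thmbran}, the energy identity, Poincar\'e, Jensen. Yours is more direct but requires a quadratic argument where the paper's intermediate lemma keeps everything linear in $\int_K u$.

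One point requires more care in your write-up: the lemma demands that $\alpha$ depend only on $M$ and $\theta$, not on $\mathrm{Vol}(K)$, with all $K$-dependence going into the multiplicative constant $C_K$. As written, your resolution of the self-referential inequality yields $\bar u\leq 2v_K+C_K(\|u_0\|^2+1)$ with a $\mathrm{Vol}(K)$-dependent prefactor on $\|u_0\|^2$; substituting this into the Beckner--Onofri exponent would then put $\mathrm{Vol}(K)$-dependence into the coefficient of $\|u_0\|^2$, i.e.\ into $\alpha$. This is easily repaired: in the step where you pass from $2c\sqrt{C}\sqrt{\|u_0\|^2+1}$ (here $c=\frac{1}{\sqrt{\lambda_1\mathrm{Vol}(K)}}$) to something linear in $\|u_0\|^2$, do not use $\sqrt{x}\leq x$ with the $K$-dependent $c$ in front; instead apply Young's with a fixed parameter, $2c\sqrt{C}\sqrt{\|u_0\|^2+1}\leq(\|u_0\|^2+1)+c^2C$, so that the coefficient of $\|u_0\|^2$ is $K$-free and all the $\mathrm{Vol}(K)$-dependence is absorbed into an additive constant. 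With that adjustment the exponent in front of both $\|u_0\|^2$ and $\ln\int_K e^{2u}$ depends only on $q=|\int_M\bQ|$, $\lambda_1$, and the Beckner--Onofri constant, as required.
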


\begin{proof}
Recall that from Theorem \ref{thmbran} 
one has the existence of $C>0$ such that
$$\int_{M}e^{2u}\leq C\exp\left(\frac{1}{16\pi^{2}}\int_{M}u\overline{P}'u+\frac{2}{V}\int_{M}u\right).$$
Again, by the energy identity (\ref{energy identity}) and Young's inequality, we have
\begin{align}
\int_{M}u\overline{P}'u & \leq E(u_{0})-2\int_{M}\overline{Q}'(u-\bar{u})-2\bar{u}\int_{M}\overline{Q}'\notag\\
&\leq E(u_{0})-2\bar{u}\int_{M}\overline{Q}'+\frac{1}{\varepsilon}\|\overline{Q}'\|_{L^{2}}^{2}+ \frac{\varepsilon}{\lambda_{1}}\int_{M}u\overline{P}'u\notag.
\end{align}
Thus, for $\varepsilon=\frac{\lambda_{1}}{2}$,
$$\frac{1}{2}\int_{M}u\overline{P}'u\leq E(u_{0})-2\bar{u}\int_{M}\overline{Q}'+\frac{2}{\lambda_{1}}\|\overline{Q}'\|_{L^{2}}^{2}.$$
Therefore
$$\int_{M}e^{2u}\leq C\exp\left(\frac{1}{8\pi^{2}}E(u_{0})+ \frac{\|\overline{Q}'\|_{L^{2}}^{2}}{4\lambda_{1}\pi^{2}}+\left(2-\frac{1}{4\pi^{2}}\int_{M}\overline{Q}'\right)\bar{u}\right) .$$
Now we notice that $E(u_{0})\leq \|u_{0}\|^{2}+\|\overline{Q}'\|_{L^{2}}^{2}$, hence there exist constants $C_{1}$ and $C_{2}$ such that
$$\int_{M}e^{2u}\leq C_{1}\exp\left(\frac{1}{8\pi^{2}}\|u_{0}\|^{2}+C_{2}\int_{M}u\right).$$
By using Lemma \ref{concentration1}, we get
$$\int_{M}e^{2u}\leq \bar C_{K}\exp\left(A_{1}\|u_0\|^{2}+\frac{A_{2}}{Vol(K)}\max\left(\int_{K}u,0\right)\right),$$
where $\bar C_{K}$ depends on $Vol(K)$. Now, we set $\alpha=\max\left(A_{1},\frac{A_{2}}{2},2\right)>1$, and we get
$$\int_{M}e^{2u}\leq \bar C_{K}\exp\left( \alpha \|u_0\|^{2}+\frac{\alpha}{Vol(K)}\max\left(\int_{K}2u,0\right)\right).$$
But Jensen's inequality yields
$$\exp\left(\frac{1}{Vol(K)}\displaystyle\int_{K}2u\right)\leq \frac{1}{Vol(K)}\int_{K}e^{2u},$$
in particular
$$\exp\left(\frac{\alpha}{Vol(K)}\max\left(\int_{K}u,0\right)\right)\leq \max\left(\left(\frac{1}{Vol(K)}\int_{K}e^{2u}\right)^{\alpha},1\right).$$
Therefore, by adjusting the constant eventually
$$\int_{M}e^{2u}\leq C_{K}e^{\alpha\|u_{0}\|^{2}}\max\left(\left(\int_{K}e^{2u}\right)^{\alpha},1\right)$$
which completes the proof.
\end{proof}

\noindent
Next we move to the proof of Theorem \ref{negativecase}. We set
$$K=\left\{x\in M; f(x)\leq \frac{1}{2}\inf_{x\in M}f(x)\right\}.$$
From the compatibility condition $(i)$ in (\ref{hypotheses}), we have that $Vol(K)>0$, and since
$$\int_{M}\overline{Q}'=\int_{M}fe^{2u_{0}} ,$$
we have
$$\frac{\int_{M}\overline{Q}'}{\displaystyle\inf_{x\in M} f(x)}\leq \int_{M}e^{2u_{0}}.$$
Thus, there exists $C>0$ (we will assume $C>1$ actually) such that
$$\int_{M}e^{2u_{0}}\leq C\exp\left[C\left(\int_{M}u_{0}\overline{P}'u_{0} +\int_{M}u_{0}^{2}\right)\right]=Ce^{C\|u_{0}\|^{2}}.$$
Hence,
\begin{equation}\label{formula1}
  \frac{\int_{M}\overline{Q}'}{\displaystyle\inf_{x\in M} f(x)}\leq Ce^{C\|u_{0}\|^{2}}.
\end{equation}

\noindent
Next we will prove the following
\begin{lemma}\label{concentration3}
Let $C_{K}$ and $\alpha$ be the constants found in Lemma \ref{concentration2}. Let
$$r=C_{K}(8C)^{\alpha}e^{(C+1)\alpha \|u_{0}\|^{2}}, $$
and let us assume that
$$e^{\tau \|u_{0}\|^{2}}\sup_{x\in M} f(x)\leq C_{0},$$
where $\tau=\alpha(C+1)-C$ and
$$C_{0}=-\frac{\displaystyle\inf_{x\in M} f(x)}{8^{\alpha}C_{K}C^{\alpha-1}}.$$
Then for all $t\geq 0$, it holds
$$\int_{M}e^{2u}\leq 2r.$$
\end{lemma}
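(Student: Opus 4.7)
The plan is a continuity/bootstrap argument on the quantity $A(t) := \int_M e^{2u(t)}$. By Lemma \ref{globalexistence} the flow is smooth in $H$, and combined with Theorem \ref{thmbran} this makes $A$ a continuous function of $t$. Before any argument is run, one checks that the initial datum is well inside the target region: the bound $\int_M e^{2u_0} \leq C e^{C\|u_0\|^2}$ derived just before the lemma statement, together with $r = C_K(8C)^\alpha e^{(C+1)\alpha\|u_0\|^2} \geq C e^{C\|u_0\|^2}$ (since $C, C_K \geq 1$ and $\alpha > 1$), gives $A(0) < r < 2r$.

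The central input will be the constraint $u(t) \in X$. Since constants belong to $\hat{\mathcal{P}}$ and $\Gamma$ is an orthogonal projection, integrating commutes with $\Gamma$, so the constraint reads $\int_M f e^{2u} = \int_M \bQ$. Splitting this integral over $K$ and $M\setminus K$ and using $f \leq \tfrac{1}{2}\inf_M f < 0$ on $K$ yields
\[
\tfrac{1}{2}|\inf_M f|\int_K e^{2u} \leq \Bigl|\int_M \bQ\Bigr| + (\sup_M f)_+\, A(t).
\]
Dividing by $|\inf_M f|$ and feeding in (\ref{formula1}) produces the key estimate
\[
\int_K e^{2u} \leq 2C e^{C\|u_0\|^2} + \frac{2(\sup_M f)_+}{|\inf_M f|}\, A(t).
\]

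With this in hand, I would run the bootstrap by contradiction. Let $T^* := \inf\{t \geq 0 : A(t) > 2r\}$; assume $T^* < \infty$, so $A(T^*) = 2r$ by continuity. Substituting $A = 2r$ into the key estimate and using the hypothesis $\sup_M f \leq C_0 e^{-\tau\|u_0\|^2}$ together with the identities $\tau = (C+1)\alpha - C$ and $C_0 = |\inf_M f|/(8^\alpha C_K C^{\alpha-1})$, a direct computation shows the second term collapses to $4C e^{C\|u_0\|^2}$, so $\int_K e^{2u(T^*)} \leq 6Ce^{C\|u_0\|^2} \leq 8Ce^{C\|u_0\|^2}$. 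Since $C \geq 1$ this upper bound is at least $1$, so Lemma \ref{concentration2} applies and gives
\[
A(T^*) \leq C_K e^{\alpha\|u_0\|^2}\bigl(8Ce^{C\|u_0\|^2}\bigr)^{\alpha} = C_K (8C)^\alpha e^{(C+1)\alpha\|u_0\|^2} = r,
\]
contradicting $A(T^*) = 2r$. Hence $A(t) \leq 2r$ for all $t \geq 0$.

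The hard part is the bookkeeping: one must align the constants in the constraint estimate, the initial bound (\ref{formula1}), and Lemma \ref{concentration2} so that the closing implication $A \leq 2r \Rightarrow \int_K e^{2u} = O(e^{C\|u_0\|^2}) \Rightarrow A \leq r$ produces exactly the prescribed $\tau$ and $C_0$, which is precisely what the particular shape of $r$ (with factor $(8C)^\alpha$) is engineered to deliver. The edge case $\sup_M f \leq 0$ requires no separate treatment since the positive-part notation $(\cdot)_+$ automatically removes the second term in the key estimate.
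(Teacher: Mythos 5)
Your proof is correct, and it takes a slightly different route than the paper. The paper's argument at time $T^*$ splits into a dichotomy according to whether $\int_M f^+ e^{2u} \leq \tfrac{1}{2}\int_M f^- e^{2u}$ or not, deriving the bound $\int_K e^{2u(T^*)} \leq 8Ce^{C\|u_0\|^2}$ separately in each case (the first case uses only the constraint and \eqref{formula1}; the second uses the hypothesis on $\sup f$). You instead derive a single unified estimate $\int_K e^{2u} \leq 2Ce^{C\|u_0\|^2} + \tfrac{2(\sup_M f)_+}{|\inf_M f|}A(t)$ directly from splitting the constraint over $K$ and $M\setminus K$, and then plug in $A(T^*)=2r$ along with the definitions of $r$, $\tau$, $C_0$ to collapse the second term to $4Ce^{C\|u_0\|^2}$, giving $\int_K e^{2u(T^*)} \leq 6Ce^{C\|u_0\|^2} \leq 8Ce^{C\|u_0\|^2}$ and then closing via Lemma~\ref{concentration2} exactly as in the paper. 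The unified estimate is cleaner and avoids the case distinction; the only cost is a slightly less tight intermediate constant ($6C$ vs.\ the paper's $8C$, but both are bounded by $8C$, which is the constant baked into $r$ via the factor $(8C)^\alpha$). Your remarks on checking $A(0)<2r$, the continuity of $A$, the identity $\int_M \Gamma(fe^{2u}) = \int_M fe^{2u}$ via $\Gamma 1 = 1$, and the vacuity of the $\sup f \leq 0$ edge case are all correct and worth making explicit.
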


\begin{proof}
Let
$$T=\sup\left\{s\geq 0; \int_{M}e^{2u} \leq 2r \text{ in } [0,s]\right\}$$
and let us assume for the sake of contradiction that $T<\infty$. We notice that by continuity, we have that
$$\int_{M}e^{2u(T)}=2r.$$
We assume first that
$$\int_{M}f^{+}e^{2u(T)} \leq \frac{1}{2}\int_{M}f^{-}e^{2u(T)},$$
where $f^+:=\max\{f,0\}$ and $f^-=f^{+}-f$ denote the positive and negative part of $f$ respectively. Then we have
$$\int_{M}f^{-}e^{2u(T)}\leq -2\int_{M}fe^{2u(T)}=-2\int_{M}\overline{Q}'\leq -4\int_{M}\overline{Q}' .$$
Since in $K$ we have $f^{-}(x)\geq -\frac{1}{2}\displaystyle\inf_{x\in M} f(x)$, we have
$$\int_{K}e^{2u(T)}\leq \frac{8\int_{M}\overline{Q}'}{\displaystyle\inf_{x\in M} f(x)}$$
which combined with (\ref{formula1}) gives
$$\int_{K}e^{2u(T)}\leq 8C e^{C\|u_{0}\|^{2}}.$$
But from Lemma \ref{concentration2}, we have
$$\int_{M}e^{2u(T)}\leq C_{K}e^{\alpha \|u_{0}\|^{2}}\max\left(\left(\int_{K}e^{2u}\right)^{\alpha},1\right).$$
Thus
$$\int_{M}e^{2u(T)}\leq C_{K}e^{\alpha \|u_{0}\|^{2}}\left(8Ce^{C\|u_{0}\|^{2}}\right)^{\alpha}=r,$$
which is a contradiction.\\
So we move to the next case, where
$$\int_{M}f^{+}e^{2u(T)}>\frac{1}{2}\int_{M}f^{-}e^{2u(T)}.$$
Then we have
$$-\frac{1}{2}\inf_{x\in M} f(x)\int_{K}e^{2u(T)}\leq \int_{M}f^{-}e^{2u(T)}<2\int_{M}f^{+}e^{2u(T)}\leq 4r \sup_{x\in M} f(x).$$
Hence,
$$\int_{K}e^{2u(T)}\leq -\frac{8r\displaystyle\sup_{x\in M} f(x)}{\displaystyle\inf_{x\in M} f(x)}.$$
By using our assumption, we have that
$$ \int_{K}e^{2u(T)}\leq -\frac{8r e^{-\tau \|u_{0}\|^{2}}C_{0}}{\displaystyle\inf_{x\in M} f(x)},$$
and by Lemma \ref{concentration2}, we have
$$ \int_{M}e^{2u(T)}\leq C_{K}e^{\alpha\|u_{0}\|^{2}} \left(\frac{8r e^{-\tau \|u_{0}\|^{2}}C_{0}}{-\displaystyle\inf_{x\in M} f(x)}\right)^{\alpha}\leq r,$$
leading again to a contradiction. Hence $T=+\infty$ and $\displaystyle\int_{M}e^{2u}$ is uniformly bounded.
\end{proof}

\noindent
Now, by Jensen's inequality we have
$$\exp{\left(\frac{1}{V}\int_{M}2u\right)}\leq \frac{1}{V}\int_{M}e^{2u}\leq \frac{2r}{V},$$
thus $\bar{u}$ is bounded from above. Now again using the energy identity (\ref{energy identity}), we have
$$\int_{M}u\overline{P}'u+2\int_{M}\overline{Q}'(u-\bar{u})+2\bar{u}\int_{M}\overline{Q}'\leq E(u_{0}),$$
and
$$\int_{M}u\overline{P}'u+2\int_{M}\overline{Q}'(u-\bar{u})\geq \frac{1}{2}\int_{M}u\overline{P}'u -\frac{2\|\overline{Q}'\|^{2}_{L^{2}}}{\lambda_{1}}\geq -C_{3}.$$
Therefore
$$2\bar{u}\int_{M}\overline{Q}'\leq E(u_{0})+C_{3},$$
and since $\displaystyle\int_{M}\overline{Q}'<0$ we have that $\bar{u}$ is uniformly bounded from below which finishes the proof of Theorem \ref{negativecase}.


\subsection{Case $\displaystyle\int_{M}\bQ =0$ and proof of Theorem \ref{zerocase}}

\noindent
Since $\displaystyle\int_{M}\overline{Q}'=0$, we have that
$$\langle \nabla E(u),1\rangle=2\int_{M}\overline{P}'u=0$$
and
$$\langle \nabla N(u),1\rangle =2\int_{M} \Gamma\left(fe^{2u}\right)=2\int_{M}\overline{Q}'=0$$
Hence,
$$0=\int_{M}\partial_{t}u=\partial_{t}\int_{M}u,$$
which means that the average value of $u$ is preserved. Therefore $\bar{u}=\bar{u}_{0}$ and by Corollary \ref{average}, we have the convergence of the flow. This ends the proof of Theorem \ref{zerocase}.


\subsection{Case $0<\displaystyle\int_{M}\bQ <16\pi^{2}$ and proof of Theorem \ref{positivecase}}

\noindent
First, we have again from the energy identity (\ref{energy identity})
\begin{equation}\label{positive1}
  \int_{M}u\overline{P}'u+2\int_{M}\overline{Q}'(u-\bar{u})+2\bar{u}\int_{M}\overline{Q}' \leq E(u_{0}).
\end{equation}
Hence
$$2\bar{u}\int_{M}\overline{Q}'\leq E(u_{0})-\frac{1}{2}\int_{M}u\overline{P}'u +\frac{2}{\lambda_{1}}\|\overline{Q}'\|_{L^{2}}^{2}$$
and then $\bar{u}$ is bounded from above; we will need a bound from below. Since $u\in X$, we get
$$\int_{M}\overline{Q}'=\int_{M}fe^{2u}\leq \|f\|_{\infty}\int_{M}e^{2u} ,$$
and therefore
$$\ln\left(\frac{\displaystyle\int_{M}\overline{Q}'}{\|f\|_{\infty}}\right)\leq \ln\left(\int_{M}e^{2u}\right).$$
Now again from Theorem \ref{thmbran} we have
\begin{equation}\label{positive2}
\ln\left(\frac{\displaystyle\int_{M}\overline{Q}'}{\|f\|_{\infty}}\right)\leq C+\frac{1}{16\pi^{2}}\int_{M}u\overline{P}'u +\frac{2}{V}\int_{M}u.
\end{equation}

\noindent
Let $\delta>0$ to be determined later, we sum equation (\ref{positive1}) and $-\delta$ times equation (\ref{positive2}), obtaining
$$\ln\left(\frac{\displaystyle\int_{M}\overline{Q}'}{\|f\|_{\infty}}\right)-\delta E(u_{0})\leq C+\left(\frac{1}{16\pi^{2}}-\delta\right)\int_{M}u\overline{P}'u+2\left(1-\delta\int_{M}\overline{Q}'\right)\bar{u}
-2\delta\int_{M}\overline{Q}'(u-\bar{u}) .$$
Since $\displaystyle\int_{M}\overline{Q}'<16\pi^{2}$, we choose $\delta$ such that $\displaystyle\int_{M}\overline{Q}'<\frac{1}{\delta}<16\pi^{2}$, and we set
$$c_{1}=2\left(1-\delta \int_{M}\overline{Q}'\right), \qquad c_{2}=\delta-\frac{1}{16\pi^{2}}.$$
We have
$$\ln\left(\frac{\displaystyle\int_{M}\overline{Q}'}{\|f\|_{\infty}}\right) -\delta E(u_{0})-C+c_{2}\int_{M}u\overline{P}'u+2\delta\int_{M}\overline{Q}'(u-\bar{u})\leq c_{1}\bar{u}.$$
Now we notice that
$$c_{2}\int_{M}u\overline{P}'u+2\delta\int_{M}\overline{Q}'(u-\bar{u})\geq (c_{2}\lambda_1- \delta \varepsilon) \|u-\bar{u}\|_{L^{2}}^{2} -\frac{\delta}{\varepsilon}\|\overline{Q}'\|_{L^{2}}^{2},$$
therefore for $\varepsilon$ small enough we have that
$$c_{2}\int_{M}u\overline{P}'u+2\delta\int_{M}\overline{Q}'(u-\bar{u})\geq-c_{3} .$$
It follows that $\bar{u}$ is bounded from below and therefore from Corollary \ref{average} this finishes the proof.


\section{The critical case and proof of Theorem \ref{criticalcase}}

\noindent
Here we will study the case $\displaystyle\int_{M}\bQ =16\pi^{2}$, where $M=S^3$ is the sphere equipped with its standard contact structure. We will see $S^{3}$ as a subset of $\C^{2}$ with coordinates $(\zeta_{1},\zeta_{2})$ such that
$$S^3=\left\{(\zeta_1,\zeta_2)\in \C^2 \; ; \;|\zeta_{1}|^{2}+|\zeta_{2}|^{2}=1 \right\}.$$
Recall that the set $Aut(\H^1)$ of conformal transformation of the Heisenberg group $\H^1$ is generated by left translations, dilations. rotations and inversion. Using the Cayley transform $C:\H^{1}\to S^{3}\{(0,-i)\}$ one has a clear description of the set $Aut(S^{3})$:
$$Aut(S^{3})=\{C\circ h\circ C^{-1}; h\in Aut(\H^1)\}.$$
For $p\in S^{3}$ and $r\geq 1$, we will write $h_{p,r}$ the element of $Aut(S^{3})$ corresponding to a Cayley transform centered at $p$ and a dilation of size $r$. Now, for $u\in X$ we set
$$v_{p,r}=u\circ h_{p,r}+\frac{1}{2}\ln(J(h_{p,r})),$$
where we denoted $J(h)=det(Jac(h))$, the Jacobian determinant of $h$.
We have
$$E(v_{p,r})=E(u)\leq E(u_{0}),$$
and since $u\in X$
$$\int_{S^{3}}f\circ h_{p,r}e^{2v_{p,r}}=\int_{S^{3}}fe^{2u},$$
hence
$$\int_{S^{3}}e^{2v_{p,r}}\geq \frac{16\pi^{2}}{\displaystyle\sup_{x\in S^{3}} f(x)}.$$
From \cite[page~38]{Bran}, we know that for all $t\geq 1$ there exists $r(t)\geq 1$ and $p(t)\in S^3$ such that
$$\int_{S^{3}}\xi_{i}e^{2v_{p(t),r(t)}}=0, \; i=1,2.$$
So we let $v(t)=v_{p(t),r(t)}$ and $h(t)=h_{p(t),r(t)}$. Then using Corollary \ref{imp} in the Appendix, one has the existence of $a<\displaystyle\frac{1}{16\pi^{2}}$ and a constant $C_1$ such that
$$a\int_{S^{3}}v(t)\overline{P}'v(t)+2\int_{S^{3}}v(t)- \ln\left(\int_{S^{3}}e^{2v(t)}\right)+C_1\geq 0.$$
Since $E(v(t))\leq E(u_{0})$, we find that
$$\int_{S^{3}}v(t)\overline{P}'v(t)\leq C,$$
and
$$\left|\int_{S^{3}}v(t)\right| \leq C.$$
In particular we have that for all $p\geq 1$
$$\int_{S^{3}}e^{2|pv(t)|}\leq C_{p},$$
and hence
$$\int_{S^{3}}v^{2}(t)\leq C$$
leading to the boundedness of $v(t)$ in $H$. We need the following concentration-compactness lemma in order to prove uniform boundedness.
\begin{lemma}\label{Concentration-Compactness}
Either
\item
$(i)\quad \|u(t)\|\leq C$, for some constant $C$;\\
or
\item
$(ii)\quad $ there exists a sequence $t_{n}\to \infty$ and a point $p_{0}\in S^{3}$ such that for all $r>0$
$$\lim_{n\to \infty}\int_{B_{r}(p_{0})}fe^{2u(t_{n})}=16\pi^{2}.$$
Moreover, for any $\tilde{x}\in S^{3}\setminus \{p_{0}\}$, and any $r<d(\tilde{x},p_{0})$, we have
$$\lim_{n\to \infty}\int_{B_{r}(\tilde{x})}fe^{2u(t_{n})}=0.$$
\end{lemma}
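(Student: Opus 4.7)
The plan is to argue by dichotomy: if (i) fails, then some sequence $t_n\to\infty$ satisfies $\|u(t_n)\|\to\infty$, and I will extract the concentration point $p_0$ from the degeneration of $h(t_n)=h_{p(t_n),r(t_n)}$. The key input is that $v(t_n)$ is uniformly bounded in $H$ (as established just before the lemma), so the unboundedness of $\|u(t_n)\|$ must come entirely from the transformation $u\mapsto u\circ h(t_n)+\tfrac12\ln J(h(t_n))$. First I would rule out a bounded dilation parameter: if $r(t_n)$ stayed bounded then, after passing to a subsequence with $p(t_n)\to q_0$, $h(t_n)$ would remain in a compact subset of $\mathrm{Aut}(S^3)$. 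On such a compact family the Jacobian is bounded above and below and the composition $u\mapsto u\circ h$ is a bi-Lipschitz map on the Folland-Stein space, so the bound on $\|v(t_n)\|$ would transfer to a bound on $\|u(t_n)\|$, a contradiction. Hence $r(t_n)\to\infty$ and, after extraction, $p(t_n)\to q_0\in S^3$.

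Next I would analyze the degeneration of $h_{p,r}$ as $r\to\infty$. Using the factorization through the Cayley transform and Heisenberg dilations, the maps $h(t_n)$ converge uniformly on compact subsets of $S^3\setminus\{q_0^\ast\}$ to a constant map whose value I denote $p_0$, where $q_0^\ast$ is the unique ``exceptional'' point determined by the Cayley chart centered at $q_0$. In particular $h(t_n)(x)\to p_0$ for a.e.\ $x\in S^3$.

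I would then push the measure via the conformal change of variables: for every $g\in C(S^3)$,
$$\int_{S^3} g\,e^{2u(t_n)}\,\theta\wedge d\theta \;=\; \int_{S^3}(g\circ h(t_n))\,e^{2v(t_n)}\,\theta\wedge d\theta.$$
Since $\|v(t_n)\|\leq C$, Theorem \ref{thmbran} gives a uniform bound on $\|e^{2v(t_n)}\|_{L^p}$ for every $p\geq 1$; after a further extraction $e^{2v(t_n)}\rightharpoonup \phi$ weakly in $L^p$. Combined with $g\circ h(t_n)\to g(p_0)$ a.e.\ and uniformly bounded, the right-hand side converges to $g(p_0)\int_{S^3}\phi$. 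Thus $fe^{2u(t_n)}\,\theta\wedge d\theta$ converges weakly in the sense of measures to $C_\infty\,f(p_0)\,\delta_{p_0}$ with $C_\infty=\int_{S^3}\phi$. Choosing $g=f$ and using $u(t_n)\in X$, so $\int fe^{2u(t_n)}=\int_{S^3}\overline{Q}'=16\pi^2$, pins down $C_\infty f(p_0)=16\pi^2$; testing against continuous cutoffs supported in $B_r(p_0)$ and in $B_r(\tilde x)$ respectively yields the two limits asserted in (ii).

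The main obstacle is the concentration step: proving rigorously that $h(t_n)(x)\to p_0$ a.e.\ and that the exceptional point $q_0^\ast$ carries no mass in the weak limit. The former demands a direct computation with the Cayley-Heisenberg factorization, describing precisely where the two degenerating parameters push the mass; the latter relies on the uniform $L^p$-integrability of $e^{2v(t_n)}$ supplied by the Beckner-Onofri inequality, which prevents a single point from accumulating mass in the limit. Everything else in the argument is either a routine extraction or an application of the conformal change-of-variables identity already used throughout the paper.
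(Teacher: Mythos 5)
Your proposal is correct and follows essentially the same route as the paper: dichotomize on whether the dilation parameter degenerates, transfer the uniform bound on $v(t)$ back to $u(t)$ when $r(t)$ stays bounded (the paper does this by bounding $\int_{S^3}|u(t)|$ and invoking Corollary 3.1, you via a bi-Lipschitz composition argument --- both are fine), and when $r(t_n)\to\infty$ use the a.e.\ convergence $h(t_n)(\cdot)\to p_0$ together with the conformal change of variables $\int g\,e^{2u(t_n)}=\int (g\circ h(t_n))e^{2v(t_n)}$ and the uniform $L^p$ bound on $e^{2v(t_n)}$ to concentrate the mass at $p_0$, the very claim the paper also asserts without proof. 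The one place to tighten the writing is the final step: the paper avoids passing to a weak-$*$ measure limit by estimating $\int_{K_n^c}|f\circ h(t_n)|\,e^{2v(t_n)}$ directly via Cauchy--Schwarz with $K_n=h(t_n)^{-1}(B_r(p_0))$, which gives $\int_{B_r(p_0)}fe^{2u(t_n)}=16\pi^2+o(1)$ at once; in your version, to go from continuous test functions to the indicator of a ball you should explicitly invoke Portmanteau for the positive measures $e^{2u(t_n)}\,\theta\wedge d\theta\rightharpoonup C_\infty\delta_{p_0}$ together with the continuity of $f$ near $p_0$.
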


\begin{proof}
We assume first that $r(t)$ is bounded. Then we have that
$$0<C_{1}\leq J(h_{p(t),r(t)})\leq C_{2}.$$
Thus, from the uniform boundedness of $v(t)$ we have
$$\int_{S^{3}}|u(t)|\leq C.$$
Therefore, from Lemma \ref{average}, it follows that $\|u(t)\|$ is uniformly bounded.\\
So now we assume that $r(t)$ is not bounded, then there exists a sequence $t_{n}\to \infty$ such that $r(t_{n})\to \infty$ and without loss of generality, by compactness of $S^{3}$ we can assume that $p(t_{n})\to p_{0}$. From the uniform boundedness of $v(t)$, we can also assume that $v(t_{n})\to v_{\infty}$ strongly in $L^{2}(S^{3})$ and weakly in $H$. We let then $r>0$ and set $K_{n}=h(t_{n})^{-1}(B_{r}(p_{0}))$. Then we have
$$\left|\int_{S^{3}}f\circ h(t_{n})e^{2v(t_{n})}-\int_{K_{n}}f\circ h(t_{n})e^{2v(t_{n})}\right|\leq \left(\sup_{x\in S^{3}} f(x)\right) \left(Vol(K_{n}^{c})\int_{S^{3}}e^{4|v(t_{n})|}\right)^{\frac{1}{2}}.$$
Since $h(t_{n})(x)\to p_{0}$ a.e. then $\displaystyle\lim_{n\to \infty}Vol(K_{n})=V,$ and thus
$$\int_{B_{r}(p_{0})}fe^{2u(t_{n})}=\int_{K_{n}}f\circ h(t_{n})e^{2v(t_{n})}=\int_{S^{3}}f\circ h(t_{n})e^{2v(t_{n})}+o(1).$$
We have also
$$\int_{S^{3}}f\circ h(t_{n})e^{2v(t_{n})}=16\pi^{2},$$
and then
$$\lim_{n\to \infty}\int_{B_{r}(p_{0})}fe^{2u(t_{n})}=16\pi^{2}.$$
Now if we consider $\tilde{x}\in S^{3}\setminus \{p_{0}\}$ and $r<d(p_{0},\tilde{x})$ we have that $h(t_{n})(x)\not \in B_{r}(\tilde{x})$ for $n$ big enough, since $\displaystyle\lim_{n\to \infty} h(t_{n})(x)=p_{0}$ a.e.; in particular
$$\lim_{n\to \infty}\chi_{h(t_{n})^{-1}(B_{r}(\tilde{x}))}=0,$$
where $\chi$ is the characteristic function. Therefore
$$\lim_{n\to \infty}\int_{B_{r}(\tilde{x})}fe^{2u(t_{n})}=\lim_{n\to \infty}\int_{h(t_{n})^{-1}(B_{r}(\tilde{x}))}f\circ h(t_{n})e^{2v(t_{n})}=0.$$
\end{proof}

\noindent
Let us assume now that $\Sigma = \emptyset$. By using the previous lemma, if $\|u(t)\|$ is not uniformly bounded, then there exists $p_{0}\in S^{3}$ such that
$$\lim_{n\to \infty}\int_{B_{r}(p_{0})}fe^{2u(t_{n})}=16\pi^{2},$$
and if $p_{1}\not=p_{0}$ and $r<d(p_{0},p_{1})$, then
$$\lim_{n\to \infty}\int_{B_{r}(p_{1})}fe^{2u(t)}=0.$$
Since $\Sigma=\emptyset$, then there exists $g\in G$ such that $p_{1}=g\cdot p_{0}\not=p_{0}$. But
$$16\pi^{2}=\lim_{n\to \infty}\int_{B_{r}(p_{0})}fe^{2u(t)}=\lim_{n\to \infty}\int_{B_{r}(g\cdot p_{0})}fe^{2u(t)}=\lim_{n\to \infty}\int_{B_{r}(p_{1})}fe^{2u(t)}=0$$
which is a contradiction. Hence $\|u(t)\|$ is uniformly bounded.\\
Now we assume that $\Sigma\neq\emptyset$ and that $\|u(t)\|$ is not uniformly bounded. We have that the concentration point $p_{0}\in \Sigma$, otherwise we reach a contradiction arguing as in the previous case. So we have
$$\int_{B_{r}(p_{0})}fe^{2u(t_{n})}\leq \sup_{x\in B_{r}(p_{0})}f(x)\int_{B_{r}(p_{0})}e^{2u(t_{n})}\leq \max\left(\sup_{x\in B_{r}(p_{0})}f(x),0\right)\int_{S^{3}}e^{2u(t_{n})}.$$
By using the the sphere version of Theorem \ref{thmbran}, proved in \cite{Bran}, we have that
$$\frac{1}{V}\int_{S^{3}}e^{2u(t_{n})}\leq e^{\frac{E(u(t_{n}))}{V}}.$$
Thus
$$\int_{B_{r}(p_{0})}fe^{2u(t_{n})}< \max\left(\sup_{x\in B_{r}(p_{0})}f(x),0\right)V e^{\frac{E(u_{0})}{V}}.$$
Now we first let $n\to \infty$, then $r\to \infty$ and we get
$$16\pi^{2}< V \max(f(p_{0}),0)e^{\frac{E(u_{0})}{V}}.$$
Therefore $f(p_{0})>0$ and
$$1< f(p_{0})e^{\frac{E(u_{0})}{16\pi^{2}}},$$
hence
$$f(p_{0})>e^{-\frac{E(u_{0})}{16\pi^{2}}},$$
which leads to a contradiction of the assumption in Theorem \ref{criticalcase}. Therefore we have the uniform boundedness of $\|u\|$ also in this case, which yields the convergence of the flow and it ends the proof.



\appendix
\section{Appendix: Improved Moser-Trudinger Inequality }

\noindent
In what follows we will consider $S^{3}$ as a subset of $\C^{2}$ with coordinates $(\zeta_{1},\zeta_{2})$ such that $|\zeta_{1}|^{2}+|\zeta_{2}|^{2}=1$. We recall here the Improved Moser-Trudinger inequality introduced in \cite{Bran} in order to prove the existence of a minimizer:

\begin{proposition}(\cite{Bran}, Proposition 3.4)\label{propbran}
Given $\frac{1}{2}<a<1$, there exist constants $C_{1}(a)$, $C_{2}(a)$ such that for $u\in H$ with $\int_{S^{3}}\zeta_{i}e^{2u}=0$, $i=1,2$, it holds:
$$\frac{a}{16\pi^{2}}\int_{S^{3}}uP'u+2\int_{S^{3}}u -\ln\left(\int_{S^{3}}e^{2u}\right)+C_{1}(a)\|(-\Delta_{b})^{\frac{3}{4}}u\|_{2}^{2}+C_{2}(a)\geq 0$$
\end{proposition}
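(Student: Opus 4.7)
My plan is to prove Proposition \ref{propbran} via a concentration--compactness argument, using Branson's sharp Moser--Trudinger--Onofri inequality (the boundary case $a=1$, essentially Theorem \ref{thmbran}) as the limiting estimate and the two barycenter constraints $\int_{S^{3}}\zeta_{i}e^{2u}=0$ to exclude concentration. Denoting the left-hand side without $C_{2}(a)$ by
$$F_{a}(u):=\frac{a}{16\pi^{2}}\int_{S^{3}}uP'u+2\int_{S^{3}}u-\ln\left(\int_{S^{3}}e^{2u}\right)+C_{1}(a)\|(-\Delta_{b})^{3/4}u\|_{2}^{2},$$
I suppose for contradiction that there is a sequence $u_{n}\in H$ satisfying the moment conditions with $F_{a}(u_{n})\to -\infty$. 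Splitting $u_{n}=\bar u_{n}+v_{n}$ with $\int v_{n}=0$, the blow-up is encoded either in $\int v_{n}P'v_{n}$ or in $|\bar u_{n}|$, since $\int u_{n}P'u_{n}=\int v_{n}P'v_{n}$ and the subcritical penalty is unchanged by constants.

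If $\int v_{n}P'v_{n}$ stays bounded, the norm equivalence from Section~2 gives (up to subsequence) $v_{n}\rightharpoonup v_{\infty}$ weakly in $H$ and strongly in $L^{2}$, and Theorem \ref{thmbran} yields uniform $L^{p}$ bounds on $e^{2v_{n}}$ for every $p<\infty$; the coercivity in $\bar u_{n}$ coming from the $2\int u_{n}$ and $-\ln\int e^{2u_{n}}$ terms then rules out $|\bar u_{n}|\to\infty$ as well, so $F_{a}$ passes to the limit, a contradiction. Hence $\int v_{n}P'v_{n}\to\infty$, and combining with Theorem \ref{thmbran} forces the normalized measures $d\mu_{n}:=e^{2v_{n}}/\int e^{2v_{n}}$ to be nearly extremal in Branson's inequality; by a sub-Riemannian concentration--compactness analysis (the CR analogue of P.-L.~Lions' second lemma for the Paneitz-type operator $P'$) it follows that $\mu_{n}\rightharpoonup \delta_{p_{0}}$ weakly as Radon measures for some $p_{0}\in S^{3}$. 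Testing the preserved barycenter constraint against this limit then gives
$$\zeta_{i}(p_{0})=\lim_{n\to\infty}\int_{S^{3}}\zeta_{i}\,d\mu_{n}=0,\quad i=1,2,$$
contradicting $|\zeta_{1}(p_{0})|^{2}+|\zeta_{2}(p_{0})|^{2}=1$; this is the CR analogue of Aubin's improved Onofri inequality on $S^{2}$.

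The main obstacle lies in the concentration step: one must rule out multi-bubble blow-up and quantify the gain provided by the subcritical penalty $C_{1}(a)\|(-\Delta_{b})^{3/4}u\|_{2}^{2}$. Using the CR dilations $h_{p,r}$ from $Aut(S^{3})$ introduced in Section~5, a single bubble contributes the critical amount $\tfrac{1}{16\pi^{2}}\int uP'u$ to $\ln\int e^{2u}$, while any additional bubble would require either an extra critical contribution from $\int uP'u$---restoring Branson's sharp bound and contradicting the factor $a<1$---or a non-negligible amount of subcritical $(-\Delta_{b})^{3/4}$ energy, which is precisely what the penalty is designed to make costly once $a<1$. Carrying out this profile decomposition, essentially a Struwe-type analysis transplanted to the Heisenberg group $\mathbb{H}^{1}$ via the Cayley transform and adapted to $P'$, is where the bulk of the technical work lies; once the single-bubble structure is secured, the remainder of the argument is the routine limit passage outlined above.
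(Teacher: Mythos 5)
The paper does not prove this proposition; it is quoted verbatim, with citation, from Branson--Fontana--Morpurgo \cite{Bran}, and the text immediately remarks that this version is not usable along the flow because of the $\|(-\Delta_b)^{3/4}u\|_2^2$ term. What the appendix actually proves is the stronger Corollary \ref{imp} (no penalty term at all), via the measure-theoretic concentration--compactness scheme of Chang--Fengbo \cite{CB}, developed in Lemma \ref{lemA1} and Proposition \ref{propA1}. So there is no proof of the stated proposition in the paper to compare against; the comparison that makes sense is with the appendix's argument for Corollary \ref{imp}.

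Judged on its own, your sketch is in the same concentration--compactness family as that argument, but it has a real gap at the decisive step. You assert that the normalized measures $\mu_n=e^{2v_n}/\int_{S^3}e^{2v_n}$ converge to a single Dirac mass $\delta_{p_0}$ and then test the barycenter identity to get $\zeta_i(p_0)=0$ for $i=1,2$, which is impossible on $S^3$. But single-point concentration is not forced: as Proposition \ref{propA1} shows, the defect measure is in general a finite convex combination $\nu=\sum_{j=1}^N\nu_j\delta_{x_j}$, and for $N\ge 2$ the constraints $\sum_j\nu_j\zeta_i(x_j)=0$ are perfectly satisfiable (e.g.\ two antipodal points with equal weight), so the naive contradiction evaporates. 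Your heuristic for ruling out multi-bubble blow-up also points the wrong way: splitting the Dirichlet energy among $N$ bubbles makes $\ln\int e^{2u}$ \emph{smaller} relative to $\int uP'u$, not larger, so extra bubbles improve the estimate rather than ``restore the sharp bound'' --- indeed this is precisely why the barycenter constraints, which force $N\ge N_1=2$, produce a better constant. The paper turns this into a proof by a counting argument: each concentration point must carry at least $A_2\alpha$ of the defect mass, so $N\le 1/(A_2\alpha)$, while the moment conditions force $N\ge 2$; reconciling the two yields the improved $\alpha$. Finally, you invoke but do not supply the CR Lions-type second lemma for the fourth-order operator $P'$; this is exactly what Lemma \ref{lemA1} provides (from the Adams-type inequality), and without it the core of your contradiction is undischarged.
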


\noindent
This improved estimate will not be useful to us in our setting since it contains the term $C_{1}(a)\|(-\Delta_{b})^{\frac{3}{4}}u\|_{2}^{2}$ that we cannot bound along the flow. Notice that in \cite{Bran}, the authors exploit Ekeland's principle to exhibit a good minimizing Palais-Smale sequence that allows the control of this extra term. In our setting, we will prove a result that can be seen as intermediate between Proposition \ref{propbran} and the usual Moser-Trudinger inequality in Theorem \ref{thmbran}. In fact in \cite{Bran} the authors gave hints on how to prove this result, knowing that this method only works in dimension 3 and 5. We will follow a technique used in \cite{CB}, since it is simpler and it allows even more improved estimates.

\noindent
We set
$$P_{k}:=\Big\{\text{polynomials of }\C^{2}\text{ with degree at most }k\Big\}$$
and
$$P_{k,0}:=\Big\{f\in P_{k};\int_{S^{3}}f=0\Big\}.$$
For a given $m\in \N$ we let
$$\mathcal{N}_{m}:=\left\{\begin{array}{lc}
N\in \N; \exists x_{1},\cdots,x_{N}\in S^{3},\nu_{1},\cdots,\nu_{N}\in \R^{+}
\text{ with } \sum_{k=1}^{N}\nu_{k}=1\\
\text{ and for any } f\in P_{m,0}; \sum_{k=1}^{N}\nu_{k}f(x_{k})=0
\end{array}
\right\}.$$
We let then $N_{m}=\min \mathcal{N}_{m}$. As it was shown in \cite{CB}, one has $N_{1}=2$ and $N_{2}=4$. 
We recall from \cite{Bran} that one has the following inequality on the standard sphere:\\
There exists a constant $A_{2}>0$ such that
$$\int_{S^{3}}\exp\Big[A_{2}\frac{|u-\bar{u}|^{2}}{\|\Delta_{b} u\|_{L^{2}}^{2}}\Big]\leq C_{0}.$$
In fact the sharp constant $A_{2}$ was explicitly computed in \cite{Bran} and it has the value $A_{2}=32$. With this result we can easily deduce that if $u\in \mathcal{S}^{2}(S^{3})$ then $e^{2u}\in L^{p}(S^{3})$ for all $1\leq p< \infty$.
\begin{lemma}\label{lemA1}
Consider a sequence of functions $u_{k}\in \mathcal{S}^{2}(S^{3})$ such that
$$\bar{u}_{k}=0, \quad \|\Delta_{b}u_{k}\|_{L^{2}}\leq 1$$
and suppose that $u_{k}\rightharpoonup u$ weakly in $\mathcal{S}^{2}(S^{3})$ and
$$|\Delta_{b}u_{k}|^{2}\rightharpoonup |\Delta_{b}u|^{2} +\sigma \text{ in measure },$$
where $\sigma$ is a measure on $S^{3}$. Let $K\subset S^{3}$ be a compact set with $\sigma(K)<1$, then for all $1\leq p <\frac{1}{\sigma(K)}$ we have
$$\sup_{k}\int_{K}\exp\Big[pA_{2}u_{k}^{2}\Big] <\infty.$$
\end{lemma}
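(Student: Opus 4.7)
The plan is to localize the standard Moser--Trudinger inequality recalled just before the statement of Lemma~\ref{lemA1} via a smooth cutoff concentrating the ``lost'' Dirichlet energy on a neighborhood of $K$, and then feed the localized function into the critical exponential estimate.

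First I would set $v_k := u_k - u$, so that $v_k \rightharpoonup 0$ weakly in $\mathcal{S}^2(S^3)$. By the compactness of the Folland--Stein embedding $\mathcal{S}^2(S^3)\hookrightarrow L^2(S^3)$ we have $v_k \to 0$ strongly in $L^2$, and integrating by parts $\int|\nabla_b v_k|^2 = -\int v_k\Delta_b v_k \to 0$ gives also $\nabla_b v_k \to 0$ in $L^2$. Since $\Delta_b u_k \rightharpoonup \Delta_b u$ in $L^2$, expanding $|\Delta_b v_k|^2 = |\Delta_b u_k|^2 - 2\Delta_b u_k\Delta_b u + |\Delta_b u|^2$ and pairing against continuous test functions shows $|\Delta_b v_k|^2 \rightharpoonup \sigma$ in the weak-$\ast$ sense of measures.

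Next, given $p<1/\sigma(K)$, fix small $\eta,\epsilon,\epsilon'>0$ and a H\"older exponent $q>1$ with $qp(1+\epsilon)(1+\epsilon')(\sigma(K)+\eta) < 1$. Pick an open neighborhood $U\supset K$ with $\sigma(\overline U)\leq\sigma(K)+\eta$ and a smooth cutoff $\phi$ with $\phi\equiv 1$ on $K$, $0\leq\phi\leq 1$, and $\mathrm{supp}\,\phi\subset U$. Expanding $\Delta_b(\phi v_k) = \phi\Delta_b v_k + v_k\Delta_b\phi + 2\langle\nabla_b\phi,\nabla_b v_k\rangle$, the leading term contributes $\int\phi^2|\Delta_b v_k|^2 \to \int\phi^2\,d\sigma \leq \sigma(\overline U)\leq \sigma(K)+\eta$, while the cross and remainder terms vanish by Cauchy--Schwarz because $v_k$ and $\nabla_b v_k$ go to $0$ in $L^2$ and $\Delta_b v_k$ stays bounded. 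Hence $\limsup_k\|\Delta_b(\phi v_k)\|_{L^2}^2 \leq \sigma(K)+\eta$.

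Since $\phi\equiv 1$ on $K$, we have $u_k = u + \phi v_k$ on $K$, and two applications of Young's inequality give, for $k$ large,
$$u_k^2 \leq (1+\epsilon)(1+\epsilon')\bigl(\phi v_k-\overline{\phi v_k}\bigr)^2 + C_{\epsilon,\epsilon'}\bigl(\overline{\phi v_k}\bigr)^{2} + C_\epsilon u^2,$$
with the middle term $o(1)$ uniformly since $v_k\to 0$ in $L^2$. H\"older's inequality with exponents $q,q'$ then splits $\int_K \exp[pA_2 u_k^2]$ into two factors: the factor containing $(\phi v_k-\overline{\phi v_k})^2$ is controlled by the Moser--Trudinger inequality recalled before the Lemma, precisely because our choice of parameters makes $qp(1+\epsilon)(1+\epsilon')\|\Delta_b(\phi v_k)\|_{L^2}^2 < 1$ for $k$ large; the factor containing $u^2$ is finite because $u\in\mathcal{S}^2(S^3)$ implies $\exp[cu^2]\in L^1(S^3)$ for every $c>0$. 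Combining the two gives the required uniform bound.

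The main obstacle is the simultaneous balancing of the four small parameters $\eta,\epsilon,\epsilon',q$ so that the prefactor on the principal exponential stays strictly below the critical Moser--Trudinger threshold $A_2/\|\Delta_b(\phi v_k)\|_{L^2}^2$; a secondary point is verifying that all commutator terms generated by the sub-Laplacian product rule truly vanish, which hinges on the strong convergence $\nabla_b v_k\to 0$ in $L^2$ provided by the compact Folland--Stein embedding.
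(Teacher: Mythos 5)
Your proposal is correct and follows essentially the same route as the paper: cut off $v_k=u_k-u$ near $K$, show the cutoff kills the Dirichlet energy down to roughly $\sigma(K)$ in the limit, feed the normalized cutoff function into the sharp Moser--Trudinger inequality, and then recover $u_k^2$ from $(\phi v_k-\overline{\phi v_k})^2$ up to terms controlled by $u\in\mathcal S^2$ and the vanishing constant $\overline{\phi v_k}$. You are in fact somewhat more explicit than the paper about the final parameter bookkeeping (the H\"older split with exponents $q,q'$ and the chain $qp(1+\epsilon)(1+\epsilon')(\sigma(K)+\eta)<1$), which the paper leaves partly implicit in the sentence ``given $p<1/\sigma(K)$ we can take $p_1\in(p,1/\sigma(K))$\ldots''.
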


\begin{proof}
Let $\varphi$ be a fixed smooth compactly supported function on $S^{3}$. We set $v_{k}=u_{k}-u$. Then $v_{k}\to 0$ strongly in $L^{2}$ and weakly in $\mathcal{S}^{2}(S^{3})$. Now we compute
\begin{align}
\int_{S^{3}}|\Delta_{b}(\varphi v_{k})|^{2}&=\int_{S^{3}}(\varphi \Delta_{b}v_{k}+v_{k}\Delta_{b}\varphi+2\nabla_{H}\varphi \nabla_{H}v_{k})^{2}\notag\\
&=\int_{S^{3}}\varphi^{2}(\Delta_{b}v_{k})^{2}+v_{k}^{2}(\Delta_{b}\varphi)^{2}+4|\nabla_{H}v_{k}\nabla_{H}\varphi|^{2}+2\varphi v_{k}\Delta_{b}\varphi \Delta_{b}v_{k}+\\
&\qquad+4\varphi(\nabla_{H}\varphi \nabla_{H}v_{k})\Delta_{b}v_{k}+4v_{k}(\nabla_{H}v_{k}\nabla_{H}\varphi) \Delta_{b}\varphi. \notag
\end{align}
Hence,
$$\int_{S^{3}}|\Delta_{b}(\varphi v_{k})|^{2}\to \int_{S^{3}}\varphi^{2}d\sigma.$$
Assume that $1\leq p_{1}<\frac{1}{\sigma(K)}$ and take $\varphi$ so that $\varphi_{|K}=1$, and $\int_{S^{3}}\varphi^{2}d\sigma<\frac{1}{p_{1}}$. Then we have for $k$ large,
$$\|\Delta_{b}(\varphi v_{k})\|_{L^{2}}^{2}<\frac{1}{p_{1}}.$$
Therefore,
$$\int_{K}\exp\Big[p_{1}A_{2}(v_{k}-\overline{\varphi v_{k}})^{2}\Big] \leq \int_{S^{3}}\exp\Big[p_{1}A_{2}(\varphi v_{k}-\overline{\varphi v_{k}})^{2}\Big] \leq\int_{S^{3}}\exp\Big[A_{2}\frac{(\varphi v_{k}-\overline{\varphi v_{k}})^{2}}{\|\Delta_{b}\varphi v_{k}\|_{L^{2}}^{2}}\Big]\leq C_{0}.$$
Thus, if we fix $\varepsilon>0$, we can write
\begin{align}
u_{k}^{2}&=(v_{k}-\overline{\varphi v_{k}}+u+\overline{\varphi v_{k}})^{2}\notag\\
&=(v_{k}-\overline{\varphi v_{k}})^{2}+2(v_{k}-\overline{\varphi v_{k}})(u+\overline{\varphi v_{k}})+(u+\overline{\varphi v_{k}})^{2}\notag\\
&\leq (1+\varepsilon)(v_{k}-\overline{\varphi v_{k}})^{2}+2(1+\frac{1}{\varepsilon})u^{2}+2(1+\frac{1}{\varepsilon})^{2}\overline{\varphi v_{k}}^{2}.\notag
\end{align}
Hence, given $p<\frac{1}{\sigma(K)}$ we can take $p_{1}\in (p,\frac{1}{\sigma(K)})$ such that
$$\int_{K}e^{A_{2}p_{1}u_{k}^{2}}<C_{0},$$
which finishes the proof.
\end{proof}

\begin{corollary}
We consider the same assumptions as in Lemma \ref{lemA1} and we let $\ell=\max_{x\in S^{3}}\sigma(\{x\})\leq 1.$ Then the following hold
\begin{itemize}
\item If $\ell<1$, then for any $1\leq p<\frac{1}{\ell}$, $e^{A_{2}u_{k}^{2}}$ is bounded in $L^{p}(S^{3})$. In particular $e^{A_{2}u_{k}^{2}}\to e^{A_{2}u^{2}}$ in $L^{1}$.
\item If $\ell=1$, then there exists $x_{0}\in S^{3}$ such that $\sigma=\delta_{x_{0}}$, $u=0$ and after passing to a subsequence if necessary, we have
$$e^{A_{2}u_{k}^{2}}\rightharpoonup 1+c_{0}\delta_{x_{0}},$$
for some $c_{0}\geq0$.
\end{itemize}
\end{corollary}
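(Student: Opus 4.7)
The plan is to handle the two cases separately by combining Lemma \ref{lemA1} with a covering argument and a concentration analysis.

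For the case $\ell<1$, I would fix $1\leq p<1/\ell$ and choose $p_{1}\in(p,1/\ell)$. Given any $x\in S^{3}$, since $\sigma(\{x\})\leq\ell<1/p_{1}$, the outer regularity of $\sigma$ yields a small closed ball $K_{x}$ around $x$ with $\sigma(K_{x})<1/p_{1}$. Applying Lemma \ref{lemA1} on $K_{x}$ gives a uniform bound on $\int_{K_{x}}e^{A_{2}p_{1}u_{k}^{2}}$. By compactness of $S^{3}$ I can extract a finite subcover, obtaining a uniform bound on $\|e^{A_{2}u_{k}^{2}}\|_{L^{p_{1}}(S^{3})}$. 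Since $u_{k}\to u$ in $L^{2}$ by Rellich's theorem, along a subsequence $u_{k}\to u$ a.e., so $e^{A_{2}u_{k}^{2}}\to e^{A_{2}u^{2}}$ a.e. Because $p_{1}>1$, the sequence is equi-integrable, and Vitali's convergence theorem yields $L^{1}$ convergence.

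For the case $\ell=1$, the key observation is mass preservation: passing to the limit in $\|\Delta_{b}u_{k}\|_{L^{2}}^{2}\leq 1$ gives $\|\Delta_{b}u\|_{L^{2}}^{2}+\sigma(S^{3})\leq 1$. If $\sigma(\{x_{0}\})=1$ for some $x_{0}$, this forces $\sigma(S^{3})=1$ and $\|\Delta_{b}u\|_{L^{2}}=0$, so $u$ is constant, hence $u=0$ by the weak convergence $\bar{u}_{k}\to\bar{u}$ combined with $\bar{u}_{k}=0$. Moreover, $\sigma(S^{3}\setminus\{x_{0}\})=0$ forces $\sigma=\delta_{x_{0}}$.

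For the weak-$\ast$ convergence of $e^{A_{2}u_{k}^{2}}$ in case $\ell=1$, I would first note that the Moser--Trudinger inequality (with $\bar{u}_{k}=0$ and $\|\Delta_{b}u_{k}\|_{L^{2}}\leq 1$) gives a uniform $L^{1}$ bound, so along a subsequence $e^{A_{2}u_{k}^{2}}\rightharpoonup\mu$ as measures for some finite positive measure $\mu$. Applying the first case on any compact set $K\subset S^{3}\setminus\{x_{0}\}$ (where $\sigma(K)=0<1$) gives $L^{p}$ boundedness for every $p$ and thus $L^{1}$ convergence to $e^{A_{2}u^{2}}=1$ on $K$. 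Therefore $\mu|_{S^{3}\setminus\{x_{0}\}}=\mathcal{L}$, where $\mathcal{L}$ denotes Lebesgue measure on the sphere, and writing $\mu=\mathcal{L}+c_{0}\delta_{x_{0}}$ for some $c_{0}\geq 0$ completes the argument.

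The main obstacle I anticipate is the mass accounting in case $\ell=1$: one needs $\sigma(S^{3})\leq 1-\|\Delta_{b}u\|_{L^{2}}^{2}$, which follows from the Brezis--Lieb-type lower semicontinuity $\|\Delta_{b}u_{k}\|_{L^{2}}^{2}=\|\Delta_{b}u\|_{L^{2}}^{2}+\|\Delta_{b}(u_{k}-u)\|_{L^{2}}^{2}+o(1)$ together with the measure convergence; this must be checked carefully to ensure the Dirac mass at $x_{0}$ exhausts the deficit and forces $u=0$.
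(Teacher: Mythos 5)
Your argument is correct and follows essentially the same route as the paper's proof: in the case $\ell<1$ you run the same covering argument on compact sets of small $\sigma$-mass combined with Lemma \ref{lemA1}, and in the case $\ell=1$ you use the mass identity from the measure convergence to force $u=0$ and $\sigma=\delta_{x_0}$, then use Lemma \ref{lemA1} away from $x_0$ to identify the limit measure off the concentration point. The only minor difference is that you invoke a Brezis--Lieb-type splitting of $\|\Delta_b u_k\|_{L^2}^2$ to get $\|\Delta_b u\|_{L^2}^2+\sigma(S^3)\leq 1$, whereas the paper obtains this directly by testing the weak-$\ast$ convergence $|\Delta_b u_k|^2\rightharpoonup |\Delta_b u|^2+\sigma$ against the constant function $1$; both are correct, the latter just bypasses the intermediate decomposition. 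Your inclusion of the Rellich/Vitali argument for the "$e^{A_2 u_k^2}\to e^{A_2 u^2}$ in $L^1$" claim fills in a step the paper leaves implicit.
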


\begin{proof}
Assume that $\ell<1$ and let $1\leq p<\frac{1}{\ell}$. Then for all $x\in S^{3}$, $\sigma(\{x\})<\frac{1}{p}$. By continuity, there exists $r_{x}>0$ such that $\sigma(\overline{B_{r_{x}}(x)})<\frac{1}{p}$. Since $S^{3}$ is compact we can find a finite collection of balls of the form $B_{r_{i}}(x_{i})$ such that
$$S^{3}=\bigcup_{i=1}^{N}\overline{B_{r_{i}}(x_{i})}.$$
So using Lemma \ref{lemA1}, we have
$$\sup_{k}\int_{\overline{B_{r_{i}}(x_{i})}}\exp\Big[pA_{2}u_{k}^{2}\Big]<\infty.$$
Thus,
$$\sup_{k}\int_{S^{3}}\exp\Big[pA_{2}u_{k}^{2}\Big]<\infty.$$
We assume now that $\ell=1$. Since $\|\Delta_{b}u_{k}\|^{2}\leq 1$ we have that
$\|\Delta_{b}u\|^{2}+\sigma(S^{3})\leq 1$. Therefore, we have $u=0$ and there exists $x_{0}\in S^{3}$ such that $\sigma=\delta_{x_{0}}$. Hence, for $r$ small, we have that
$$\sup_{k}\int_{S^{3}\setminus B_{r}(x_{0})}\exp\Big[qA_{2}u_{k}^{2}\Big]<\infty,$$
for all $q\geq 1$. Therefore, $e^{A_{2}u_{k}^{2}}\to 1$ in $L^{1}(S^{3}\setminus B_{r}(x_{0}))$ for every $r>0$ and small. Hence, after passing to a subsequence if necessary we have that $e^{A_{2}u_{k}^{2}}\rightharpoonup 1+c_{0}\delta_{x_{0}}$ in measure.
\end{proof}

\begin{proposition}\label{propA1}
Let $\alpha>0$ and consider a sequence $m_{k}\to \infty$ and $u_{k}\in \mathcal{S}^{2}(S^{3})$ such that $\overline{u_{k}}=0$ and $\|\Delta_{b}u_{k}\|_{L^{2}}=1$ such that $u_{k}\rightharpoonup u$ weakly in $\mathcal{S}^{2}(
S^{3})$ and $(\Delta_{b}u_{k})^{2}\rightharpoonup (\Delta_{b}u)^{2}+\sigma$ in measure. We assume moreover that
$$\ln \left(\int_{S^{3}}e^{2m_{k}u_{k}}\right)\geq \alpha m_{k},$$
and
$$\frac{e^{2m_{k}u_{k}}}{\displaystyle\int_{S^{3}}e^{2m_{k}u_{k}}}\rightharpoonup \nu \text{ in measure}.$$
We set $R=\Big\{x\in S^{3}; \sigma(\{x\})\geq A_{2}\alpha\Big\}=\{x_{1},\cdots,x_{N}\}$.
Then $\nu=\sum_{i=1}^{N}\nu_{i}\delta_{x_{i}}$ with $\nu_{i}\geq 0$ and $\sum_{i}\nu_{i}=1$.
\end{proposition}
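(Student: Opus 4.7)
The goal is to show that the weak limit $\nu$ is supported on the finite set $R$. Since $S^3$ is compact and each $e^{2m_ku_k}/\int_{S^3}e^{2m_ku_k}$ is a probability density, the limit $\nu$ is itself a probability measure; also $|R|\leq 1/(A_2\alpha)$ because $\sigma(S^3)\leq 1$. Once the support is pinned down the decomposition $\nu=\sum_{i=1}^{N}\nu_i\delta_{x_i}$ with $\nu_i\geq 0$ and $\sum_i\nu_i=1$ is automatic. So the task reduces to showing $\nu(S^3\setminus R)=0$, and by outer regularity this amounts to checking $\nu(\overline{B_r(x_0)})=0$ for every $x_0\notin R$ and $r>0$ sufficiently small.

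Fix such an $x_0$, so $\sigma(\{x_0\})<A_2\alpha$. Because the function $r\mapsto \sigma(\overline{B_r(x_0)})$ is right-continuous and tends to $\sigma(\{x_0\})$ as $r\to 0$, we can choose $r>0$ together with a number $s$ satisfying
\[
\sigma(\overline{B_r(x_0)})\leq s<A_2\alpha\qquad\text{and}\qquad s<1,
\]
the latter so that Lemma \ref{lemA1} applies. By that lemma, for every $\eta<A_2/s$ one has the uniform bound
\[
\sup_k\int_{\overline{B_r(x_0)}}e^{\eta u_k^{2}}\leq C_\eta.
\]

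The engine of the proof is then Young's inequality $2m_ku_k\leq \eta u_k^2+m_k^{2}/\eta$. Exponentiating and integrating over $\overline{B_r(x_0)}$ yields
\[
\int_{\overline{B_r(x_0)}}e^{2m_ku_k}\leq C_\eta\, e^{m_k^{2}/\eta}.
\]
Letting $\eta\nearrow A_2/s$, the exponent approaches $sm_k^{2}/A_2$. Dividing by $\int_{S^3}e^{2m_ku_k}$, whose growth rate is controlled from below by the hypothesis, and using the strict gap $s/A_2<\alpha$ together with $m_k\to \infty$, the ratio $\int_{\overline{B_r(x_0)}}e^{2m_ku_k}\big/\int_{S^3}e^{2m_ku_k}$ is driven to $0$. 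Hence $\nu(\overline{B_r(x_0)})=0$, as required.

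The decisive point, and what I expect to be the main technical obstacle, is the precise balancing of the Young inequality so that the sharp threshold $A_2\alpha$ emerges on the $\sigma$ side. The classical Moser-Trudinger inequality only yields $\sup_k\int e^{A_2u_k^{2}}\leq C$, which would force $\eta\leq A_2$ and lose the crucial factor $1/s$; it is precisely the improved integrability of Lemma \ref{lemA1} (with exponent $A_2/s$ in place of $A_2$) that converts the pointwise information $\sigma(\{x_0\})<A_2\alpha$ into a strict exponential gap against the global growth of $\int_{S^3}e^{2m_ku_k}$. The remaining ingredients (right-continuity of the $\sigma$-mass of closed balls, outer regularity of $\nu$, compactness of $S^3\setminus R$) are standard measure-theoretic manipulations.
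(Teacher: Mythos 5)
Your argument is correct and follows essentially the same route as the paper: cover $S^{3}\setminus R$ by small sets on which $\sigma$-mass is below $A_{2}\alpha$, apply Lemma \ref{lemA1} to get improved local integrability of $e^{\eta u_{k}^{2}}$ with $\eta$ slightly above $A_{2}$, use Young's inequality $2m_{k}u_{k}\leq \eta u_{k}^{2}+m_{k}^{2}/\eta$, and compare with the assumed lower bound on $\ln\int_{S^{3}}e^{2m_{k}u_{k}}$ to kill the relative mass. Two small points of care (both present implicitly in the paper as well): the comparison requires the growth hypothesis to read $\ln\int_{S^{3}}e^{2m_{k}u_{k}}\geq \alpha m_{k}^{2}$ (quadratic, as in the paper's computation and in the application where $m_{k}=\|\Delta_{b}u_{k}\|_{L^{2}}$), and since weak$^{*}$ convergence only gives $\nu(F)\geq \limsup_{k}\mu_{k}(F)$ for closed $F$, you should conclude $\nu(B_{r}(x_{0}))=0$ for the open ball (or, as the paper does, enclose the target set in a slightly larger compact set whose interior contains it), after which the covering of $S^{3}\setminus R$ goes through as you describe.
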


\begin{proof}
Let $K\subset S^{3}$ such that $\sigma(K)<A_{2}\alpha$. By continuity, we can find a compact set $K_{1}$ such that $K\subset int(K_{1})$ and $\sigma(K_{1})<A_{2}\alpha$. Now given $\frac{1}{A_{2}\alpha}<p<\frac{1}{\sigma(K_{1})}$, we have
$$\sup_{k}\int_{K_{1}}e^{pA_{2}u_{k}^{2}}\leq C_{0}.$$
Since $2m_{k}u_{k}\leq pA_{2}u_{k}^{2}+\frac{m_{k}^{2}}{pA_{2}}$, we have
$$\int_{K_{1}}e^{2m_{k}u_{k}}\leq Ce^{\frac{m_{k}^{2}}{A_{2}p}}.$$
Therefore,
$$\frac{\displaystyle\int_{K_{1}}e^{2m_{k}u_{k}}}{\displaystyle\int_{S^{3}}e^{2m_{k}u_{k}}}\leq Ce^{\Big(\frac{1}{A_{2}p}-\alpha\Big)m_{k}^{2}}.$$
So $\nu(K)\leq \nu(K_{1})=0$ and $\nu(K)=0$. Thus, if $\sigma(\{x\})<A_{2}\alpha$, then there exists $r_{x}>0$ small enough so that $\sigma(\overline{B_{r_{x}}(x)})<A_{2}\alpha$. Hence, $\nu(\overline{B_{r_{x}}(x)})=0$. We deduce then that $\nu(S^{3}\setminus R)=0.$\\
Therefore
$$\nu=\sum_{k=1}^{N}\nu_{k}\delta_{x_{k}},$$
with $\nu_{k}\geq 0$ and $\sum_{k=1}^{N}\nu_{k}=1$.
\end{proof}

\noindent
Let $f_{1},\cdots, f_{\ell} \in C(S^{3})$. We define
$$\mathcal{S}_{f}=\left\{u\in \mathcal{S}^{2}(S^{3});\overline{u}=0; \int_{S^{3}}f_{k}e^{2u}=0; k=1,\cdots,\ell\right\}.$$
We assume that the inequality
$$\ln\left(\int_{S^{3}}e^{2u}\right)\leq \alpha\|\Delta_{b}u\|^{2}_{2}+C$$
does not hold for $u\in\mathcal{S}_{f}$. Then there exists a sequence $u_{k}\in \mathcal{S}_{f}$ such that
$$\ln\left(\int_{S^{3}}e^{2u_{k}}\right)-\alpha\|\Delta u_{k}\|_{L^{2}}^{2}\to \infty.$$
Therefore, it follows that $\int_{S^{3}}e^{2u_{k}} \to \infty$ and $\|\Delta_{b}u_{k}\|_{L^{2}}\to \infty$. So we let $m_{k}=\|\Delta_{b}u_{k}\|_{L^{2}}$ and $v_{k}=\frac{u_{k}}{m_{k}}$. Then $m_{k}\to \infty$, $\|\Delta_{b} v_{k}\|^{2}_{L^{2}}=1$. Hence, after passing to a subsequence, we have
$$\left\{\begin{array}{lll}
v_{k}\rightharpoonup v \text{ weakly in } \mathcal{S}^{2}(S^{3}),\\
\\
|\Delta_{b}v_{k}|^{2}\rightharpoonup |\Delta_{b}v|^{2}+\sigma \text{ in measure},\\
\\
\frac{e^{2m_{k}v_{k}}}{\displaystyle\int_{S^{3}}e^{2m_{k}v_{k}}}\rightharpoonup \nu \text{ in measure}.
\end{array}
\right.$$
So we let $R=\{x\in S^{3}; \sigma(\{x\})\geq A_{2}\alpha\}=\{x_{1},\cdots,x_{N}\}$. It follows from Proposition \ref{propA1} that $\nu=\sum_{j=1}^{N}\nu_{j}\delta_{x_{j}}$, with $\sum_{j=1}^{N}\nu_{j}=1$ and $\nu_{j}\geq 0$.\\
But since $u_{k}\in \mathcal{S}_{f}$, we have$$\int_{S^{3}}f_{j}d\nu=0.$$
Therefore,
$$\sum_{i=1}^{N}\nu_{i}f_{j}(x_{i})=0, \text{ for all } 1\leq j \leq \ell .$$
On the other hand, $A_{2}\alpha N\leq 1$. In particular, if $f_{j}\in P_{m,0}$, we have that $N\in \mathcal{N}_{m}$. Therefore,
$$\alpha\leq \frac{1}{A_{2}N}\leq \frac{1}{A_{2}N_{m}}.$$
Hence, if $\alpha=\frac{1}{A_{2}N_{m}}+\varepsilon$ we get a contradiction and the result holds. Therefore, if we define
$$\mathcal{S}_{0}=\left\{u\in \mathcal{S}^{2}(S^{3}); \overline{u}=0; \int_{S^{3}}fe^{2u}=0 \text{ for all } f\in P_{1,0}\right\},$$
the following corollary holds
\begin{corollary}\label{imp}
There exist $a<\frac{1}{16\pi^{2}}$ and $C>0$ such that for all $u\in \mathcal{P}\cap \mathcal{S}_{0}$, we have
$$a\int_{S^{3}}u\overline{P}'u+2\int_{M}u-\ln\left(\int_{M}e^{2u}\right) \geq -C.$$
\end{corollary}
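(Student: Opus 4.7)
The strategy is to rerun the concentration-compactness contradiction argument sketched immediately before the statement, but now with the test polynomial space $P_{1,0}$ (so $N_m$ is replaced by $N_1=2$), and then to translate the resulting $\int|\Delta_{b}u|^{2}$-type Moser-Trudinger bound into an $\int u\,\bP u$-type bound. Specifically, assume for contradiction that the inequality fails for some fixed $a<\frac{1}{16\pi^{2}}$; extract a sequence $u_{k}\in\mathcal{P}\cap\mathcal{S}_{0}$ with $\ln\!\int_{S^{3}}e^{2u_{k}}-a\int_{S^{3}}u_{k}\bP u_{k}\to+\infty$. Theorem \ref{thmbran} forces $\int u_{k}\bP u_{k}\to\infty$; and since on the standard $S^{3}$ one has $A_{11}=0$ and $R$ constant, the formula for $P'$ on $\mathcal{P}$ reduces to $\int u\,P'u = 4\int|\Delta_{b}u|^{2}+2R\int|\nabla_{b}u|^{2}$, so in particular $m_{k}:=\|\Delta_{b}u_{k}\|_{L^{2}}\to\infty$ as well.

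Set $v_{k}=u_{k}/m_{k}$; then $\bar v_{k}=0$, $\|\Delta_{b}v_{k}\|_{L^{2}}=1$, and $v_{k}\in\mathcal{P}$. Passing to subsequences we obtain $v_{k}\rightharpoonup v$ in $\S^{2}(S^{3})$, $|\Delta_{b}v_{k}|^{2}\rightharpoonup|\Delta_{b}v|^{2}+\sigma$ in measure, and $e^{2m_{k}v_{k}}/\!\int e^{2m_{k}v_{k}}\rightharpoonup\nu$, a probability measure. Since $\int v_{k}\bP v_{k}\geq c>0$ (equivalence of norms from the previous paragraph), the failure of the bound gives $\ln\!\int e^{2m_{k}v_{k}}\geq\alpha m_{k}$ for every preassigned $\alpha>0$ once $k$ is large. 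Proposition \ref{propA1} then yields $\nu=\sum_{i=1}^{N}\nu_{i}\delta_{x_{i}}$ with each $x_{i}$ an atom of $\sigma$ of mass at least $A_{2}\alpha$. The orthogonality $u_{k}\in\mathcal{S}_{0}$ passes to the limit (via Lemma \ref{lemA1}, which ensures equi-integrability of $e^{2u_{k}}/\!\int e^{2u_{k}}$ off the atoms) to give $\sum_{i}\nu_{i}f(x_{i})=0$ for every $f\in P_{1,0}$; hence $N\in\mathcal{N}_1$, so $N\geq N_{1}=2$. Combined with $\sigma(S^{3})\leq 1$, this forces $\alpha\leq\tfrac{1}{2A_{2}}=\tfrac{1}{64}$. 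Choosing $\alpha>\tfrac{1}{64}$ at the outset contradicts this, so for every $\alpha>\tfrac{1}{64}$ there exists $C$ with $\ln\!\int e^{2u}\leq\alpha\int|\Delta_{b}u|^{2}+C$ on $\mathcal{P}\cap\mathcal{S}_{0}$.

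To conclude, combine with $\int|\Delta_{b}u|^{2}\leq\tfrac{1}{4}\int u\,\bP u$ (on the standard $S^{3}$ for $u\in\mathcal{P}$) and $\int u=0$ on $\mathcal{S}_{0}$: picking any $\alpha\in(\tfrac{1}{64},\tfrac{1}{4\pi^{2}})$ (which is nonempty since $A_{2}=32>2\pi^{2}$), set $a:=\alpha/4\in(\tfrac{1}{256},\tfrac{1}{16\pi^{2}})$ and obtain $\ln\!\int e^{2u}\leq a\int u\,\bP u+C$, which rearranges to the claim. The main technical obstacle is the passage of the constraint $\int f e^{2u_{k}}=0$ to $\int f\,d\nu=0$: this rests on the subcritical exponential integrability of Lemma \ref{lemA1}, while the strict improvement that pushes the constant below the threshold $\tfrac{1}{16\pi^{2}}$ comes precisely from the fact that a single Dirac on $S^{3}\subset\C^{2}$ cannot satisfy $\int \zeta_{i}\,d\nu=0$ for $i=1,2$, forcing $N\geq 2$.
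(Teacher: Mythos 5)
Your proof is correct and follows the paper's own argument: the concentration--compactness contradiction via Proposition~\ref{propA1}, the count $N\geq N_{1}=2$ forced by the constraints $\int_{S^{3}}\zeta_{i}e^{2u}=0$, the bound $\alpha\leq\tfrac{1}{2A_{2}}$, then the conversion $\int u\,\bP u\geq 4\|\Delta_{b}u\|_{L^{2}}^{2}$ on the standard $S^{3}$, with the numerical check $A_{2}=32>2\pi^{2}$ guaranteeing a valid choice of $a<\tfrac{1}{16\pi^{2}}$. One inaccuracy in the wording: the failure of the $a$-bound gives only $\ln\int e^{2u_{k}}\geq 4a\,m_{k}^{2}$, i.e.\ the hypothesis of Proposition~\ref{propA1} with the specific value $\alpha=4a$, not ``for every preassigned $\alpha>0$''; since the remainder of your argument uses exactly that value, this is harmless.
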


\noindent
Indeed, this corollary follows from the fact that
$$\int_{S^{3}}u\overline{P}'u \geq \int_{S^{3}}|2\Delta_{b}u|^{2}$$
for all $u\in \hat{\P}$ and $8A_{2}>16\pi^{2}$.


\end{document}